\definecolor{linkred}{rgb}{0.48,0.1,0.05}
\definecolor{linkblue}{RGB}{16, 78, 139}
	\titlespacing{\section}{0pt}{12pt}{0pt}
	\titlespacing{\subsection}{0pt}{6pt}{0pt}
\long\def\@footnotetext#1{% 
\H@@footnotetext{% 
\ifHy@nesting 
\hyper@@anchor{\@currentHref}{#1}% 
\else 
\Hy@raisedlink{\hyper@@anchor{\@currentHref}{\relax}}#1% 
\fi 
}}
\def\@footnotemark{% 
\leavevmode 
\ifhmode\edef\@x@sf{\the\spacefactor}\nobreak\fi 
\H@refstepcounter{Hfootnote}% 
\hyper@makecurrent{Hfootnote}% 
\hyper@linkstart{link}{\@currentHref}% 
\@makefnmark 
\hyper@linkend 
\ifhmode\spacefactor\@x@sf\fi 
\relax 
}% 
\renewcommand*\@footnotemark{% 
\leavevmode 
\ifhmode 
\edef\@x@sf{\the\spacefactor}% 
\FN@mf@check 
\nobreak 
\fi 
\H@refstepcounter{Hfootnote}% 
\hyper@makecurrent{Hfootnote}% 
\hyper@linkstart{link}{\@currentHref}% 
\@makefnmark 
\hyper@linkend 
\ifFN@pp@towrite 
\FN@pp@writetemp 
\FN@pp@towritefalse 
\fi 
\FN@mf@prepare 
\ifhmode\spacefactor\@x@sf\fi 
\relax% 
}% 
\theoremstyle{plain}
\newtheorem{theorem}{Theorem}[section]
\newtheorem{proposition}[theorem]{Proposition}
\newtheorem{lemma}[theorem]{Lemma}
\theoremstyle{definition}
\newtheorem{definition}[theorem]{Definition}
\newcommand{\Hyp}{{\mathbb H}}
\newcommand{\id}{{\mathrm{id}}}
\newcommand{\C}{{\mathbb C}}
\newcommand{\BB}{{\mathcal B}}
\newcommand{\M}{{\mathcal M}}
\newcommand{\arccosh}{{\,\rm arccosh}}
\long\def\symbolfootnote[#1]#2{\begingroup%
\def\thefootnote{\fnsymbol{footnote}}\footnote[#1]{#2}\endgroup}
\def\blfootnote{\xdef\@thefnmark{}\@footnotetext}
\begin{document}

{\Large \bfseries \sc Connecting $p$-gonal loci in the compactification\\ of moduli space}

{\bfseries Antonio F. Costa, 
Milagros Izquierdo,
Hugo Parlier\symbolfootnote[2]{\normalsize All authors partially supported by Ministerio de Ciencia e Innovacion grant number MTM2011-23092 and third author supported by Swiss National Science Foundation grant number PP00P2\textunderscore 128557\\
{\em 2010 Mathematics Subject Classification:} Primary: 14H15. Secondary: 30F10, 30F60. \\
{\em Key words and phrases:} hyperbolic surfaces, isometries of surfaces, branch locus of moduli space
}}

{\em Abstract:  } Consider the moduli space $\mathcal{M}_{g}$ of Riemann surfaces of genus $g\geq 2$ and its Deligne-Munford compactification $\overline{\mathcal{M}_{g}}$. We are interested in the branch locus ${\mathcal{B}_{g}}$ for $g>2$, i.e., the subset of $\mathcal{M}_{g}$ consisting of surfaces with automorphisms. It is well-known that the set of hyperelliptic surfaces (the hyperelliptic locus) is connected in $\mathcal{M}_{g}$ but the set of (cyclic) trigonal surfaces is not. By contrast, we show that for $g\geq 5$ the set of (cyclic) trigonal surfaces is connected in $\overline{\mathcal{M}_{g}}$. To do so we exhibit an explicit nodal surface that lies in the completion of every equisymmetric set of $3$-gonal Riemann surfaces. For $p>3$ the connectivity of the $p$-gonal loci becomes more involved. We show that for $p\geq 11$ prime and genus $g=p-1$ there are one-dimensional strata of cyclic $p$-gonal surfaces that are completely isolated in the completion $\overline{\mathcal{B}_{g}}$ of the branch locus in $\overline{\mathcal{M}_{g}}$.

%Furthermore the number of such strata grows at most quadratically in $p$.

\vspace{1cm}

\section{Introduction}

For $g\geq2$, moduli space $\mathcal{M}_{g}$ is
the set of conformal structures that one can put on a closed surface of genus $g$. As a set it admits many structures and can naturally be given an
orbifold structure where the orbifold points correspond exactly to those
surfaces with conformal automorphisms. One way of seeing this structure is by
seeing moduli space through the eyes of Teichm\"{u}ller theory.
Teichm\"{u}ller space is the deformation space of marked conformal structures
and is diffeomorphic to $\mathbb{R}^{6g-6}=\mathbb{C}^{3g-3}$. From this,
moduli space can be seen as the quotient of Teichm\"{u}ller space by the
mapping class group, i.e, the group of homeomorphisms of the surface up to
isotopy, which naturally acts on Teichm\"{u}ller space via the marking and as
such the mapping class group is the orbifold fundamental group of the moduli
space. The orbifold points of a moduli space appear when the surfaces in question have self-isometries and the groups of
self-isometries correspond to the finite subgroups of the mapping class group. (In the genus $2$ case this is not strictly correct because every surface is hyperelliptic, so orbifold points correspond to surfaces with additional self-isometries.)
An important step in understanding the topology, and in general, the
structures that moduli space carries, is in understanding these particular
points. The set of these points is generally called the \textit{branch
locus} and is denoted $\mathcal{B}_{g}$.

With the exception of some low genus cases, $\mathcal{B}_{g}$ is disconnected
 \cite{BCI1} and displays all sorts of phenomena including having isolated points
\cite{K}. The points of $\mathcal{B}_{g}$ can be organized in strata
corresponding to surfaces with the same isometry group and the same
topological action of the isometry groups on the surfaces \cite{B} . The
closure of each strata is an equisymmetric set. Each equisymmetric set is
connected \cite{Na}, theorem 6.1 (see also \cite{MS}) and consists of surfaces with isometry group
containing a given finite group with a fixed topological action.

Furthermore, the set of Riemann surfaces that are Galois coverings of the Riemann sphere with a
fixed Galois group is a union of equisymmetric sets and corresponds to the loci of algebraic curves that admit a determined algebraic
expression. The most simple case is the (cyclic) $p$-gonal locus which is the
set of points in $\mathcal{M}_{g}$ corresponding to Riemann surfaces that are
$p$-fold cyclic coverings of the Riemann sphere. These are called cyclic $p$-gonal Riemann surfaces and in the case where $p$ is prime, having a $p$-gonal Galois
covering is equivalent to being cyclic $p$-gonal. From the algebraic
curve viewpoint these surfaces are those corresponding to an equation of type
\[
y^{p}=Q(x)
\]
where $Q$ is a polynomial. Note that if the $p$-gonal locus is connected it is
possible to continuously deform any cyclic $p$-gonal curve to other one keeping the $p$-gonality along the deformation (see \cite{SeSo}). 

The first result in this direction was to show the connectivity of the hyperelliptic locus \cite{Na} , i.e., the set of surfaces invariant
by a conformal involution with quotient a sphere. Note that as a subset of
Teichm\"{u}ller space it is disconnected. From the algebraic curve viewpoint
these surfaces are those corresponding to an equation of type
\[
y^{2}=Q(x)
\]
where $Q$ is a polynomial. In contrast, cyclic trigonal surfaces, i.e., surfaces with
an automorphism of order $3$ whose quotient is a sphere, are known to form
disconnected loci of $\mathcal{B}_{g}$ (see \cite{BSS}). These surfaces this
time correspond to surfaces with an equation of type
\[
y^{3}=Q(x).
\]
As $p$ increases, the behaviors of the corresponding $p-$gonal loci become more
exotic. For instance one can even find one (complex) dimensional equisymmetric
sets consisting of $p$-gonal surfaces and completely isolated inside
$\mathcal{B}_{g}$, the first example of this being equisymmetic sets of $11$-gonal
surfaces in genus $10$ \cite{CI2}. More generally, the connectivity of the branch locus for different types of group actions and properties of equisymmetric sets have been well studied and we refer the interested reader to \cite{BCIP}, \cite{BCI1},\cite{BCI2},\cite{BCI3},\cite{BI}, \cite{BSS}, \cite{CI3}, \cite{Se}.

In this paper we turn our attention to the Deligne-Mumford compacitification $\overline
{\mathcal{M}_{g}}$ of moduli space which is obtained by adding so-called
nodal surfaces. From the hyperbolic viewpoint, these are surfaces where a
geodesic multicurve has been pinched to length $0$. From the viewpoint of
algebraic curves, deformations correspond to variation in the coefficients or
roots of the polynomials and these nodal surfaces correspond to algebraic
curves with singularities.

Our first point of focus is on the connectivity of the cyclic trigonal locus
described above but this time in $\overline{\mathcal{M}_{g}}$. Our main result
is the following.

\begin{theorem}\label{thm:main1}
For $g\geq5$, there is an explicit nodal surface that lies in completion of
all the equisymmetric sets in the $3$-gonal locus. In particular, the set
of (cyclic) trigonal surfaces is connected in $\overline{\mathcal{M}_{g}}$.
\end{theorem}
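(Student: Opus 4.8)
The plan is to argue in three steps: enumerate the equisymmetric components of the cyclic trigonal locus, exhibit a single nodal surface to which each component degenerates, and then read off connectivity. For the first step I would record the components as follows. A cyclic trigonal surface of genus $g$ is a $\Z/3\Z$-cover of the Riemann sphere, so by Riemann--Hurwitz it has exactly $g+2$ branch points, each totally ramified with local monodromy in $\{1,2\}\subset\Z/3\Z$; the topological conjugacy class of the action --- equivalently, the equisymmetric component $\mathcal S_{(a,b)}$ it spans --- is determined by the unordered pair $(a,b)$, where $a$ (resp.\ $b$) counts the branch points of monodromy $1$ (resp.\ $2$), subject to $a+b=g+2$ and $a\equiv b\pmod 3$ (so the monodromies multiply to the identity), with $(a,b)$ and $(b,a)$ describing the same locus. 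By the connectivity of equisymmetric sets recalled in the introduction, each closure $\overline{\mathcal S_{(a,b)}}$ in $\overline{\mathcal M_g}$ is connected, so the theorem reduces to exhibiting one stable curve that lies in $\overline{\mathcal S_{(a,b)}}$ for every admissible pair. For the smallest genera there is essentially one such pair --- for $g=5$ only $(2,5)$ --- and the statement is then the already known connectedness of that single component; the real content appears when several components occur.

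For the second step I would pass to $\Z/3\Z$-admissible covers of $(g+2)$-pointed genus-$0$ stable curves --- equivalently, to pinchings of the hyperbolic surface invariant under the order-$3$ automorphism $\tau$ --- where a collision of branch points of the base induces an equivariant stable degeneration of the cover. Two elementary collisions do the work. Bringing together a branch point of monodromy $1$ and one of monodromy $2$ creates a node of the base; over it, equivariant stable reduction of the resulting ordinary triple point produces a $\Z/3\Z$-orbit of three rational bridges attached along three points cyclically permuted by $\tau$, while the branch data on the main component drops to $(a-1,b-1)$. Bringing together three branch points of the same monodromy (total monodromy $0\bmod 3$) bubbles off the $\Z/3\Z$-cover of a three-pointed sphere --- a genus-$1$ component --- glued along a $\tau$-orbit of three points, and lowers the data by three points of one type. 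Applying the first collision $\min(a,b)$ times and the second to absorb the remaining $|a-b|$ like-monodromy points, a generic member of $\mathcal S_{(a,b)}$ degenerates to a stable curve assembled from rational and elliptic components glued along $\Z/3\Z$-orbits of nodes, and pinching the leftover elliptic pieces equivariantly makes every component rational.

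The third step is the heart of the matter, and I would isolate it as a lemma about $\Z/3\Z$-labelings of trivalent trees: although the collision pattern depends on $(a,b)$, these fully degenerate limits all coincide with one and the same stable curve $X_0$, carrying a fixed $\Z/3\Z$-action on its dual graph. This $X_0$ is the explicit nodal surface of the statement, and for each admissible $(a,b)$ one writes a family $y^3=Q_t(x)$ of trigonal curves whose roots collide, as $t\to 0$, in the prescribed pattern, with stable limit $X_0$. Granting this, connectivity is immediate: $X_0$ belongs to every $\overline{\mathcal S_{(a,b)}}$, each of which is connected, so their union --- which is precisely the closure of the cyclic trigonal locus in $\overline{\mathcal M_g}$ --- is connected.

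The main obstacle is exactly this coincidence. The two collision moves on their own only force adjacent strata to share a boundary curve; steering every stratum onto a common $X_0$ requires pinning $X_0$ down explicitly, carrying out the equivariant stable reduction with care --- in particular controlling the ramification over $x=\infty$ and the fractional base changes forced when $3\nmid 2b$ --- and verifying the combinatorics of the tree labelings. Checking the finitely many small genera by hand is a useful consistency test for the general argument.
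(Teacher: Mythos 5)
Your first step (indexing the strata by $(m_+,m_-)$ with $m_++m_-=g+2$ and $m_+\equiv m_-\pmod 3$, invoking connectivity of each equisymmetric closure, and reducing to finding one common boundary point) is exactly the paper's reduction. But the actual content of the theorem --- exhibiting the explicit nodal surface and proving that \emph{every} stratum degenerates to it --- is precisely what you defer (``granting this\dots'', ``the main obstacle is exactly this coincidence''), so the proposal does not yet prove the statement. Worse, with the specific recipe you propose the coincidence is false: colliding $\min(a,b)$ opposite-monodromy pairs and $|a-b|/3$ same-monodromy triples produces limits whose dual graphs depend on $(a,b)$, not only on $g$. For instance, for $g=7$ the stratum $(9,0)$ yields three genus-one tails (each attached at three nodes) over three rational copies of the spine, a $6$-component curve, while $(6,3)$ yields three rational tails and one genus-one tail over the spine copies, a $7$-component curve; no relabeling lemma for trees can identify these. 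Two smaller slips point the same way: in the mixed collision the monodromy around the new node is trivial, so the cover of that bubble is a \emph{single} rational component meeting the rest at three nodes, not a $\Z/3\Z$-orbit of three bridges; and ``pinching the leftover elliptic pieces equivariantly to make every component rational'' is not automatic (an order-$3$ automorphism of a once-punctured torus admits no invariant multicurve at all), nor is it shown that such further pinchings can be steered, stratum by stratum, onto one and the same $X_0$.

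The paper's proof hinges on choosing the degeneration pattern so that it manifestly depends only on $g$: opposite monodromies are never paired except in the forced leftover. One groups the branch points of $S/\langle h\rangle$ into the maximal number $T=\lfloor m_+/3\rfloor+\lfloor m_-/3\rfloor$ of same-monodromy triples, builds a chain pants decomposition adapted to these triples, and pinches all pants curves. Since $m_+\equiv m_-\pmod 3$, both $T$ and the leftover ($0$, $1{+}1$ or $2{+}2$ mixed points) are determined by $r=g+2$ alone; and by the unicity lemma for the cusped order-$3$-symmetric pieces (the modular torus $Q$, the twice-punctured torus $\alpha$, the four-punctured sphere $X$, and the thrice-punctured sphere $Y$), the pinched lift is one explicit nodal surface $Q+X+\sum(X+\alpha+X)+\cdots$ independent of the split $(m_+,m_-)$ --- note it is not totally rational, it retains genus-one pieces. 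To salvage your admissible-covers route you would have to impose exactly this normalization (same-monodromy triples only, plus a tail depending on $g\bmod 3$) rather than hope that arbitrary maximal degenerations of the different strata happen to agree.
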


Because there is a single surface that connects all of the equisymmetric sets,
from the equation viewpoint this implies that any two cyclic trigonal
equations can be deformed continuously from one to the other, where one allows
the passage through at most one nodal surface.

In light of the above one might expect that this type of phenomena continues
to occur for higher order cyclic $p$-gonal surfaces but in fact this fails in
general. To show this we restrict our attention to $1$-dimensional cyclic
$p$-gonal strata in genus $g=p-1$ for $p$ prime. The connectivity already
fails for $4-$gonal locus but this is less telling as the non-primality of $4$
induces different phenomena.

We show that already for $p=5$, one connected component of the cyclic
$5$-gonal locus in genus $4$ continues to be disconnected at the boundary.
We generalize this to higher genus to obtain the following.

\begin{theorem}
For $p\geq11$ prime, there are completely isolated one-dimensional strata in
$\overline{\mathcal{B}_{p-1}}$ corresponding to cyclic $p$-gonal surfaces.
\end{theorem}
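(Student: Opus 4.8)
The plan is to work entirely on the level of generating vectors for cyclic $p$-gonal actions and to translate "completely isolated in $\overline{\mathcal{B}_{p-1}}$" into two finite verifications: first that the stratum is isolated already inside $\mathcal{B}_{p-1}$, and second that no degeneration to a nodal surface lets a neighbouring stratum touch it at the boundary. Recall that a cyclic $p$-gonal surface of genus $g=p-1$ with $p$ prime is, by Riemann–Hurwitz, a $\mathbb{Z}/p\mathbb{Z}$-cover of the sphere branched over exactly $4$ points, with branching data a vector $(a_1,a_2,a_3,a_4)$ in $(\mathbb{Z}/p\mathbb{Z})^\ast$ summing to $0$ mod $p$. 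Such a stratum is one complex-dimensional (four branch points on $\mathbb{P}^1$ modulo $\mathrm{PGL}_2$). The first step is to enumerate these vectors up to the equivalence given by automorphisms of $\mathbb{Z}/p\mathbb{Z}$ and permutation of branch points, and to single out a family of vectors — as in the $p=11$, $g=10$ example of \cite{CI2} — whose surfaces have \emph{no} automorphisms beyond the cyclic group $\mathbb{Z}/p\mathbb{Z}$ itself for the generic member of the stratum, so that the stratum is a full equisymmetric set of dimension one.

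Next I would show that such a stratum $\mathcal{S}$ is isolated in $\mathcal{B}_{p-1}$: any other equisymmetric stratum meeting $\overline{\mathcal{S}}$ would correspond to a surface admitting $\mathbb{Z}/p\mathbb{Z}$ together with an extra automorphism, hence a larger group $N$ with $\mathbb{Z}/p\mathbb{Z}\trianglelefteq N$ acting on genus $p-1$; one checks via the normalizer structure in $\mathrm{PGL}_2$ (the action on the four branch points) and Riemann–Hurwitz that for the chosen branching vectors no such $N$ exists unless the four points are in special position \emph{not} contained in the closure of $\mathcal{S}$ in $\mathcal{M}_{p-1}$. This is exactly the kind of argument already present in the literature on isolated strata, and for the specific vectors it reduces to a congruence check mod $p$ on the $a_i$. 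The genuinely new part is the boundary analysis. Here the key step is: a nodal surface in $\overline{\mathcal{M}_{p-1}}$ lying in $\overline{\mathcal{S}}$ arises by pinching a $\mathbb{Z}/p\mathbb{Z}$-invariant multicurve; since the covering is $4$-branched over $\mathbb{P}^1$, the only degenerations come from collisions of branch points on the sphere, and one must list, for each way two (or more) of the four branch points collide, the resulting stable curve with its $\mathbb{Z}/p\mathbb{Z}$-action — a nodal sphere with nodes — and the induced cover. The claim to prove is that every such limiting nodal surface is \emph{already forced} to lie only in $\overline{\mathcal{S}}$ and not in the completion of any other $p$-gonal stratum: when branch points with labels $a_i, a_j$ collide, the local monodromy on the new node is $a_i+a_j$, and the components of the normalization carry the residual branching data $(a_i, a_j, -(a_i{+}a_j))$ and $(a_i{+}a_j \text{ read as } -(a_k{+}a_\ell), a_k, a_\ell)$ — i.e. two thrice-punctured-sphere covers, which are rigid. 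One then checks that the unordered collection of these rigid pieces, together with their gluing, uniquely determines the original vector $(a_1,a_2,a_3,a_4)$ up to the allowed equivalence, precisely when the $a_i$ satisfy the arithmetic conditions singled out for $p\ge 11$. Finally one must also rule out the surface limiting into a stratum of \emph{higher-dimensional} type at the boundary and rule out the nodal surface acquiring extra automorphisms permuting the $\mathbb{Z}/p\mathbb{Z}$; both reduce to the non-existence of the group $N$ above, now on the nodal level.

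I expect the main obstacle to be the boundary analysis of the collision $a_i+a_j\equiv 0 \pmod p$, in which two branch points merge to an \emph{unbranched} point and the cover degenerates more severely: the limiting stable curve then has a component that is a disconnected $p$-fold trivial cover or a lower-genus $p$-gonal curve, and one has to verify that even these degenerations cannot be reached from a different $p$-gonal stratum. Controlling this requires a careful bookkeeping of admissible covers (in the sense of Harris–Mumford) of the stable $4$-pointed rational curves, and it is here that primality of $p$ and the hypothesis $p\ge 11$ (which guarantees enough distinct residues $a_i$ with $a_i+a_j\not\equiv 0$ to separate strata, while excluding the sporadic coincidences that occur for $p=5,7$) do the real work. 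Once the admissible-cover dictionary is set up, the remaining steps are finite congruence checks mod $p$, which I would organize into a single lemma classifying the boundary points of $\overline{\mathcal{S}}$ and exhibiting that each lies in no other $\overline{\mathcal{S}'}$, thereby concluding that $\mathcal{S}$ is completely isolated in $\overline{\mathcal{B}_{p-1}}$.
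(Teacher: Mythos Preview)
Your outline is essentially the same strategy the paper follows, recast in the language of admissible covers rather than hyperbolic geometry. The paper pinches a simple closed geodesic on the quotient orbifold $S/\langle h\rangle$ (a sphere with four cone points of order $p$), observes via the collar lemma that the pinched curve cannot pass through a cone point, and obtains a nodal surface of the form $P_i + P_j$ where each $P_k$ is the (unique, rigid) $p$-cyclic cover of the orbifold $O_{p,p,\infty}$ with monodromy type $(1,k)$. This is precisely your pair of ``thrice-punctured-sphere covers'' indexed by the residual branching data. The paper then tabulates which pairs $P_i+P_j$ occur at the boundary of each monodromy type and checks that the Type~5 strata (those with $\omega(x_1)=t$, $\omega(x_2)=t^i$, $\omega(x_3)=t^j$, $\omega(x_4)=t^{p-1-i-j}$ for generic $i,j$) yield boundary surfaces disjoint from all others; the exclusion of extra automorphisms on the nodal surface is handled, as you anticipate, via the normality of the $p$-gonal automorphism (Gonz\'alez-D\'iez).

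Two small corrections. First, the hypothesis $p\geq 11$ is not about the boundary analysis: the paper notes explicitly that the boundary argument works uniformly in $p$, but for $p<11$ there simply are no one-dimensional $p$-gonal strata that are isolated already in the \emph{interior} $\mathcal{B}_{p-1}$ (this is the input from \cite{CI2}), so the statement is vacuous there. Your sentence attributing the bound to ``sporadic coincidences'' at the boundary for $p=5,7$ is therefore misplaced. Second, the collision $a_i+a_j\equiv 0\pmod p$ that you flag as the main obstacle is not a more severe degeneration. The node then lifts to $p$ nodes and the corresponding piece $P_{p-1}$ is a $p$-punctured sphere rather than a positive-genus surface, but it is still a single rigid connected cover with an order-$p$ automorphism fixing two points, and it enters the bookkeeping on exactly the same footing as every other $P_k$; no disconnected or lower-genus component appears, because the two order-$p$ branch points remain on each side of the node.
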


Note that the techniques we use are not specific to $p\geq11$ but for lower
genus there aren't any isolated one dimensional strata in $\mathcal{M}_{g}$
let alone its completion.

\textbf{Organization.}

The article is organized as follows. We begin with a section of preliminaries
which includes well known results and certain basic lemmas we will need in the
sequel. We then prove the connectivity of cyclic trigonal surfaces in the
compactification. The last section deals with cyclic $p-$gonal surfaces in
genus $p-1$.

\textbf{Acknowledgements.}

The authors are grateful to Jeff Achter for pointing out the relationship between Theorem \ref{thm:main1} and \cite[Prop. 2.11]{AP}. The first and third authors are grateful to the University of Link\"{o}ping for hosting them for a stay during which substantial progress was made on this work.

\section{Preliminaries}

Let $f: S \to \hat{\C}$ be an $l$-fold branched covering and let $\{b_1,\hdots,b_r\}$ be the set of branched points. Let $o$ be a point in $\hat{\C}\setminus \{b_1,\hdots,b_r\}$ and assume $f^{-1}(o)= \{o_1,\hdots,o_l\}$.

We define the {\it monodromy} $\omega_f$ of $f$ as a map
$$
\omega_f: \pi_1(\hat{\C}\setminus \{b_1,\hdots,b_r\}, o) \to \Sigma_l = {\mathcal P}\{1,\hdots,l\} 
$$
as follows. Let $y=[\nu] \in  \pi_1(\hat{\C}\setminus \{b_1,\hdots,b_r\},o)$, where $\nu$ is a loop based in $o$. Now
$\omega_f(y)$ is a permutation on $\{1,\hdots,l\}$ which takes $u\in \{1,\hdots,l\}$ to $v\in \{1,\hdots,l\}$ (i.e. $\omega_f(y)(u)=v$) if  the lift $\tilde{\nu}$ of $\nu$ with origin in $o_u$ finishes in $o_v$.

\begin{figure}[h]
%\ShowGrid
\leavevmode \SetLabels
\L(.28*.845) $O_1$\\
\L(.68*.83) $O_l$\\
\L(.44*.83) $O_u$\\
\L(.52*.86) $O_v$\\
\L(.495*.41) $O$\\
\L(.44*.67) $\tilde{\upsilon}$\\
\L(.545*.24) $\upsilon$\\
\endSetLabels
\begin{center}
\AffixLabels{\centerline{\epsfig{file =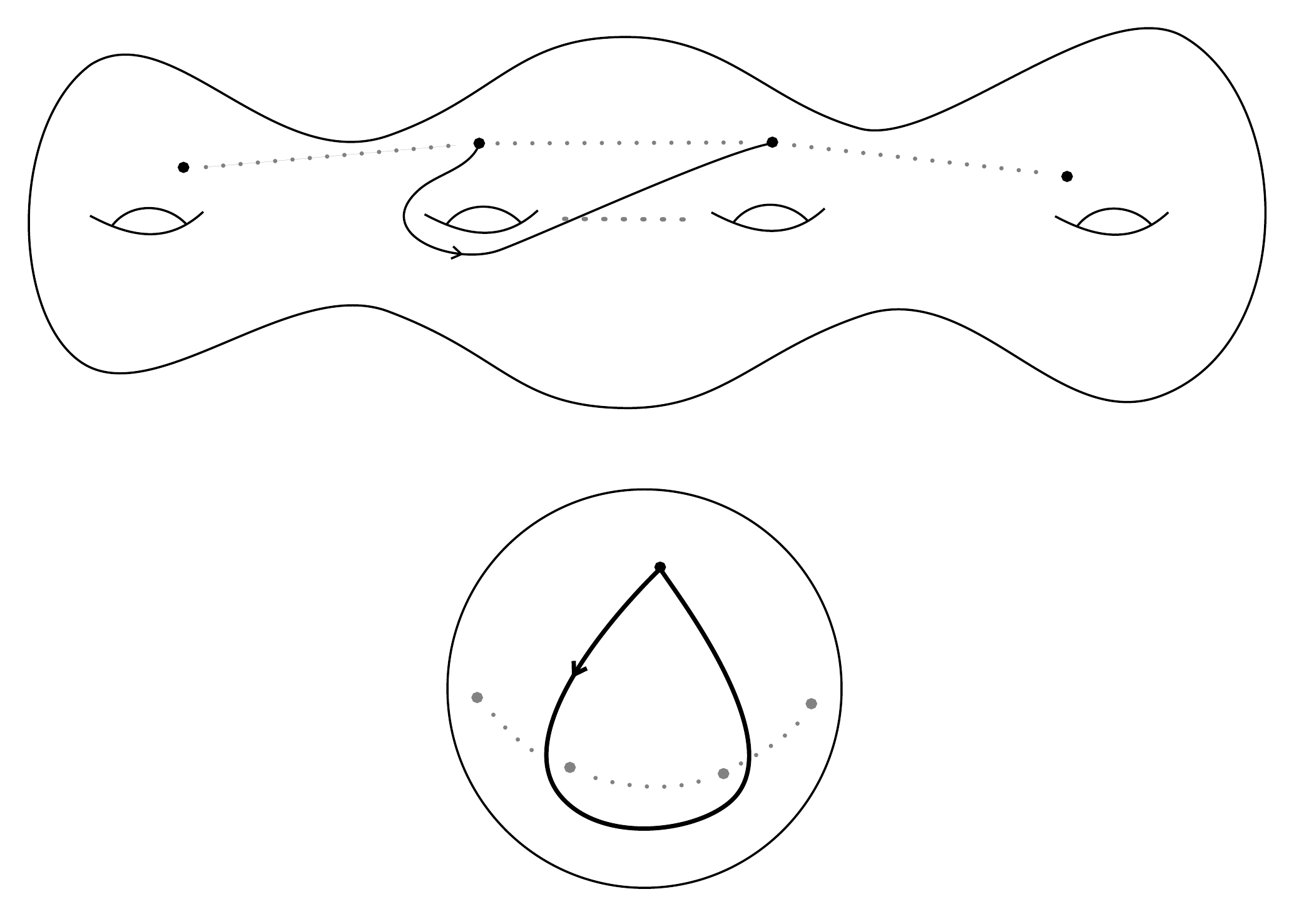,width=8.5cm,angle=0}}}
\vspace{-24pt}
\end{center}
\caption{An illustration of $\tilde{\upsilon}$ for a simple loop $\upsilon$} \label{fig:monodromy}
\end{figure}

We call $x_i \in \pi_1(\hat{\C}\setminus \{b_1,\hdots,b_r\},o)$ a {\it meridian of a branch point} $b_i$ if it is represented by a simple loop $\xi_i$ based at $o$ that bounds a disk which contains $b_i$ and none of the other branch points. 

\begin{figure}[h]
%\ShowGrid
\leavevmode \SetLabels
\L(.53*.92) $b_1$\\
\L(.54*.49) $b_i$\\
\L(.50*.08) $b_v$\\
\L(.64*.4) $\xi_i$\\
\endSetLabels
\begin{center}
\AffixLabels{\centerline{\epsfig{file =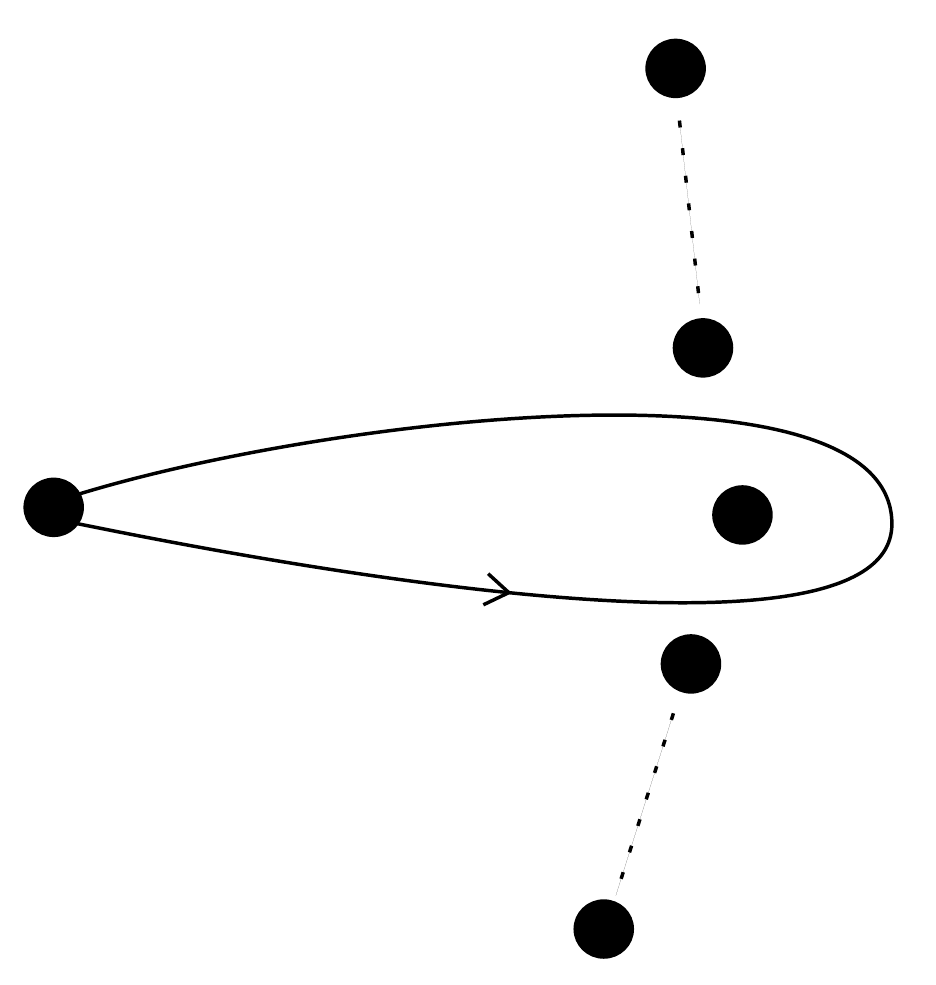,width=4.5cm,angle=0}}}
\vspace{-24pt}
\end{center}
\caption{The loop $\xi_i$ corresponding to the meridian $x_i$} \label{fig:meridian}
\end{figure}

Meridians will be useful in the sequel as they are natural generators of the fundamental group $\pi_1(\hat{\C}\setminus \{b_1,\hdots,b_r\}, o)$ and monodromy representations are entirely determined by which permutation one associates to these elements. Specifically a set $x_1,\hdots,x_r$ is said to be a canonical set of generators of $\pi_1(\hat{\C}\setminus \{b_1,\hdots,b_r\},o)$ if the $x_i$ are all meridians and $\pi_1(\hat{\C}\setminus \{b_1,\hdots,b_r\},o)$ admits the following group presentation:
$$
<x_1,\hdots,x_r \mid x_1\hdots x_r =1 >.
$$

One fact we will use regularly is that there is a lower bound on the length of closed geodesics that pass through fixed points of automorphisms provided the automorphism has order $>2$. More specifically we have:
\begin{lemma}\label{lem:collar}
Let $S$ be a hyperbolic surface and $h$ an automorphism of $S$ of order $d>2$. Then there exists a constant $C_d>0$ such that every closed geodesic $\gamma$ that passes through at least one fixed point of $h$ satisfies
$$
\ell(\gamma)>C_d.
$$
Furthermore one can take $C_d\geq 2 \arccosh \frac{1}{\sin \frac{2\pi}{d}}$.
\end{lemma}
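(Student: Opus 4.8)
The plan is to reduce the lemma to a statement about the Fuchsian group uniformising the quotient orbifold $S/\langle h\rangle$, and then to invoke discreteness of that group.

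First, I would record the local picture at a fixed point $p$ of $h$. The stabiliser of $p$ inside $\langle h\rangle$ is all of $\langle h\rangle$, and a nontrivial finite cyclic group of isometries of $\Hyp$ with a fixed point acts faithfully on the tangent plane there; hence $h$ rotates about $p$ by an angle $2\pi k/d$ with $\gcd(k,d)=1$. Since $d>2$ this angle is not a multiple of $\pi$, so the differential of $h$ at $p$ does not fix the tangent line to $\gamma$; in particular $h(\gamma)$ is a closed geodesic distinct from $\gamma$ and the two cross transversally at $p$. (This is the qualitative reason the bound exists, and it makes visible that $d>2$ is needed.) We may also assume $\gamma$ is primitive, since passing to its primitive root only decreases the length.

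Next I would pass to the universal cover. Choose a lift $\tilde p$ of $p$ that is fixed by a lift $\tilde h$ of $h$; then $\tilde h$ is elliptic of order exactly $d$, since $\tilde h^{\,d}$ lies in the torsion-free group $\pi_1(S)$ and fixes $\tilde p$, hence is the identity. Let $\Gamma^{*}=\langle\pi_1(S),\tilde h\rangle$ be the Fuchsian group of the orbifold $S/\langle h\rangle$: it is discrete, $\pi_1(S)$ is normal in it with $\Gamma^{*}/\pi_1(S)\cong\Z/d$, and therefore every torsion element of $\Gamma^{*}$ has order dividing $d$. The lift of $\gamma$ through $\tilde p$ is the axis of a primitive hyperbolic element $T\in\pi_1(S)$ of translation length $\ell(\gamma)$, and $\tilde p$ lies on this axis, so $d(\tilde p,T\tilde p)=\ell(\gamma)$. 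The crucial observation is that $T\tilde h T^{-1}\in\Gamma^{*}$ is a second elliptic element of order $d$ whose fixed point is $T\tilde p\neq\tilde p$. Thus $\Gamma^{*}$ contains two elliptic elements of order $d$ with distinct fixed points lying at distance $\ell(\gamma)$ from one another.

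Finally I would bound below how close the fixed points of two order-$d$ elliptic elements of a discrete group — all of whose torsion has order dividing $d$ — can be. Having distinct fixed points, the two elliptics share no fixed point on $\partial\Hyp$ and so generate a non-elementary discrete group; as one pushes their fixed points together, either the product becomes parabolic or an auxiliary elliptic of order dividing $d$ appears, so the extremal configuration is a von Dyck triangle group $\Delta(d,d,k_{0})$ with $k_{0}$ the smallest admissible value (a divisor of $d$, or $\infty$). A hyperbolic-trigonometry computation in the corresponding triangle — most cleanly via the angle-of-parallelism relation — then yields the explicit positive value of $C_d$, and one verifies $C_d\geq 2\arccosh\frac{1}{\sin(2\pi/d)}$, with equality when $d=3$. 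The one genuine subtlety is that $\gamma$ need not be simple, which is exactly why one cannot argue with the ordinary collar lemma around $\gamma$ and must instead work with the full group $\Gamma^{*}$ and exploit that all of its torsion has order dividing $d$; without that last constraint the bound would be strictly weaker and, for $d=3$, false.
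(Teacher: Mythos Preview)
Your plan is sound and would lead to the same bound, but it takes a considerably longer route than the paper. The paper's argument is a one-line reduction: project $\gamma$ to the quotient orbifold $O=S/\langle h\rangle$, observe that the image $\gamma'$ is a closed geodesic through a cone point of order $d$ with $\ell(\gamma)\geq\ell(\gamma')$, and then invoke the orbifold collar lemma of Dryden--Parlier \cite{DP}, which immediately gives the lower bound $2\arccosh\frac{1}{\sin(2\pi/d)}$.

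Your approach also passes to the quotient orbifold (via its uniformising group $\Gamma^{*}$), but then, rather than citing the collar lemma, you effectively set out to reprove it: you translate the problem into bounding the distance between the fixed points of two conjugate order-$d$ elliptics in a discrete group whose torsion divides $d$, and then argue that the extremal configuration is a triangle group $\Delta(d,d,k_0)$. This is exactly the kind of analysis underlying the collar lemma itself. Your identification of the constraint ``all torsion has order dividing $d$'' is the right one and is what makes the bound match (for $d=3$ one is forced to $\Delta(3,3,\infty)$, and indeed $\arccosh(5/3)=2\arccosh(2/\sqrt{3})$ as you assert). The one step you leave as an assertion --- that the extremal case really is a triangle group --- is nontrivial: it requires either J{\o}rgensen-type discreteness inequalities or a direct trace analysis of the products $\tilde h^{a}(T\tilde h T^{-1})^{b}$, and should be spelled out if you want a self-contained proof. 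The paper sidesteps all of this by citing \cite{DP}; your version is more from-scratch but correspondingly longer and, as written, still defers the crucial estimate.
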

\begin{proof}
Consider $O:=S/<h>$ is an orbifold and $\gamma$ projects to a closed geodesic $\gamma'$ on $S/<h>$ which passes through an orbifold point of order $d$ and satisfies 
$$
\ell(\gamma)\geq \ell(\gamma').
$$
Now by the collar theorem on orbifold surfaces \cite{DP}, the length of any geodesic segment that passes through the collar is at least
$$
 \arccosh \frac{1}{\sin \frac{2\pi}{d}}
 $$
which gives the lower bound on the length of $\gamma$.
\end{proof}
A slightly more general lemma is indeed true but this is sufficient for our purposes.

\section{The topology of the branch locus of cyclic trigonal surfaces}

\begin{definition} A trigonal surface is a Riemann surface $S$ such that there exists a $3$-fold covering from $S$ to the Riemann sphere $\hat{\C}$. The morphism $f:S\to \hat{\C}$ is called the trigonal morphism. If the covering is regular, the surface is said to be cyclic trigonal. 
\end{definition}

One can express that a surface is $3$-gonal in terms of the monodromy \cite{ST}:

\begin{proposition} $f:S\to \hat{\C}$ is cyclic trigonal if and only if the monodromy representation is as follows:
\begin{eqnarray*}
\omega_f:  \pi_1(\hat{\C}\setminus \{b_1,\hdots,b_r\},o) & \to& \Sigma_3\\
x_i &\mapsto&(1,2,3) \mbox{ or } (1,3,2)
\end{eqnarray*}
\end{proposition}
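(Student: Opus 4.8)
The plan is to read the proposition through the classical dictionary between finite branched covers of $\hat{\C}$ with branch values in $\{b_1,\dots,b_r\}$ and homomorphisms $\omega\colon\pi_1(\hat{\C}\setminus\{b_1,\dots,b_r\},o)\to\Sigma_l$ taken up to conjugation: under this correspondence $S$ is connected precisely when $\operatorname{im}\omega$ is a transitive subgroup of $\Sigma_l$, and $f$ is a regular (Galois) covering precisely when a point stabilizer for the $\operatorname{im}\omega$-action on $\{1,\dots,l\}$ is normal in $\operatorname{im}\omega$, the deck group then being $\operatorname{im}\omega$ acting on itself by translations. For $l=3$ this reduces the statement to a one-line group-theoretic computation, and I would open the argument by recalling this dictionary (in the form needed here it is exactly the content of \cite{ST}).

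For the ``if'' direction, assume each meridian $x_i$ is sent to $(1,2,3)$ or to $(1,3,2)$. Since a canonical set of meridians $x_1,\dots,x_r$ generates $\pi_1(\hat{\C}\setminus\{b_1,\dots,b_r\},o)$, the image of $\omega_f$ is contained in the cyclic group $\langle(1,2,3)\rangle\cong\Z/3\Z$; because $S$ is connected this image is transitive on $\{1,2,3\}$ and hence equals $\langle(1,2,3)\rangle$. In the regular representation of $\Z/3\Z$ every point stabilizer is trivial, in particular normal, so $f$ is a regular covering with deck group $\Z/3\Z$ and quotient $S/\langle\tau\rangle=\hat{\C}$ for $\tau$ a generator of the deck group; that is, $S$ is cyclic trigonal.

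For the converse, suppose $S$ is cyclic trigonal, with $\tau$ an automorphism of order $3$ for which $f$ is the quotient map $S\to S/\langle\tau\rangle\cong\hat{\C}$. The group $\langle\tau\rangle$ acts freely and transitively on the fiber $f^{-1}(o)=\{o_1,o_2,o_3\}$, so I would fix a bijection $\{o_1,o_2,o_3\}\leftrightarrow\Z/3\Z$ intertwining the $\langle\tau\rangle$-action with translation on $\Z/3\Z$; with respect to this labelling the lift of any loop based at $o$ induces translation by a fixed element of $\Z/3\Z$, so $\omega_f$ lands in the subgroup of translations. Each branch value $b_i$ is the image of a fixed point of $\tau$, near which $\tau$ acts as a rotation of order $3$, so lifting a small meridian $\xi_i$ around $b_i$ cyclically permutes the three local sheets; concretely $\omega_f(x_i)$ is translation by a generator of $\Z/3\Z$, which inside $\Sigma_3$ is precisely a $3$-cycle, i.e. $(1,2,3)$ or $(1,3,2)$. (Here one uses, as is implicit in the statement, that $\{b_1,\dots,b_r\}$ is exactly the branch locus, so no $\omega_f(x_i)$ is trivial.)

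The routine part is the enumeration of subgroups of $\Sigma_3$ (the only transitive ones are $\langle(1,2,3)\rangle$ and $\Sigma_3$, and only the former has a normal point stabilizer). I expect the only points demanding care to be the use of connectedness of $S$ to force transitivity of the monodromy, and, on the geometric side, checking that the identification of fibers with $\Z/3\Z$ is consistent for all loops so that the local monodromy around a branch point comes out as an honest $3$-cycle rather than some unrelated permutation. Neither is a genuine obstacle: the proposition is really a bookkeeping reformulation of the definition of ``cyclic trigonal'', recorded because the monodromy description is the form that gets used in the sequel.
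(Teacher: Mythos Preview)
Your argument is correct and is exactly the standard unpacking of the covering-space/monodromy dictionary from \cite{ST}. The paper itself does not prove this proposition at all: it simply states it with a reference to \cite{ST}, so your proposal supplies the details that the authors chose to omit.
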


We're interested in the case where are surfaces that can be endowed with a hyperbolic metric, thus we suppose that the genus $g$ of the surfaces is $\geq 2$. In this case, the surfaces admit a characterization in terms of Fuchsian groups.

\begin{proposition} $S$ is cyclic trigonal if and only if there exists a Fuchsian group $\Phi$ of signature $(0,\overset{r}{[3,\hdots,3]})$ and an epimorphism $\theta:\Phi \to C_3$ such that $S = \Hyp/\ker(\theta)$ and such that $\theta(x)\neq 1$ for each $x$ elliptic in $\Phi$.
\end{proposition}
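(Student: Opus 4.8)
The statement to prove is a standard characterization of cyclic trigonal surfaces via Fuchsian groups: $S$ is cyclic trigonal exactly when $S = \Hyp/\ker(\theta)$ for a Fuchsian group $\Phi$ of signature $(0,[3,\overset{r}{\dots},3])$ and a surjection $\theta:\Phi\to C_3$ that is nontrivial on all elliptic elements. The plan is to translate the monodromy description from the previous proposition into the language of Fuchsian uniformization of the base orbifold, proving the two implications separately.

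**Forward direction.**

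Suppose $S$ is cyclic trigonal, so there is a regular $3$-fold branched covering $f\colon S\to\hat\C$. First I would argue that the branching is uniform of order $3$ at every branch point: since the covering is regular (Galois) with deck group $C_3$, the local degree at a point over $b_i$ divides $3$ and is $>1$ at a genuine branch point, hence equals $3$, and all points over $b_i$ have the same local degree. Thus the quotient orbifold $O = \hat\C$ with cone points of order $3$ at $b_1,\dots,b_r$ underlies $f$, and $O$ has signature $(0,[3,\dots,3])$. By Riemann--Hurwitz applied to $S\to O$, $2g-2 = 3(-2) + r\cdot 2(1-\tfrac13)$, so $r = g+2$; in particular $r\geq 4$ once $g\geq 2$, which guarantees (via the orbifold Gauss--Bonnet / Euler characteristic computation $\chi^{\mathrm{orb}}(O) = -2 + r(1-\tfrac13) < 0$) that $O$ admits a hyperbolic structure, uniformized by a Fuchsian group $\Phi$ of signature $(0,[3,\dots,3])$ with $\Hyp/\Phi = O$. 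The covering $S\to O$ corresponds to a finite-index subgroup, and regularity with group $C_3$ means this subgroup is the kernel of an epimorphism $\theta\colon\Phi\to C_3$; concretely $\theta$ is the composition of the surjection $\pi_1^{\mathrm{orb}}(O\setminus\{b_i\})\to\pi_1^{\mathrm{orb}}(O)\cong\Phi$ with the monodromy $\omega_f$ landing in the cyclic group $\langle(1,2,3)\rangle\cong C_3$ inside $\Sigma_3$. The condition that $\omega_f(x_i) = (1,2,3)$ or $(1,3,2)$, i.e.\ is never the identity, says precisely that $\theta$ sends each elliptic generator $x_i$ of $\Phi$ (and hence every elliptic element, since each is conjugate to a power of some $x_i$) to a nontrivial element of $C_3$, so $S = \Hyp/\ker\theta$ as desired.

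**Converse direction.**

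Conversely, given such a $\Phi$ and $\theta$, set $S = \Hyp/\ker(\theta)$. Because $\theta$ is nontrivial on every elliptic element of $\Phi$, the subgroup $\ker(\theta)$ contains no elliptic elements and is therefore torsion-free, so $S$ is a smooth Riemann surface; finite index in $\Phi$ makes it closed, and another Riemann--Hurwitz count (now with $r$ cone points of order $3$ downstairs) shows its genus is $g = r-2 \geq 2$, consistent with the hyperbolic hypothesis. The inclusion $\ker(\theta)\trianglelefteq\Phi$ induces a regular covering $S = \Hyp/\ker(\theta)\to\Hyp/\Phi = O$ with deck group $\Phi/\ker(\theta)\cong C_3$; composing with the forgetful map $O\to\hat\C$ that fills in the cone points gives a degree-$3$ regular branched covering $f\colon S\to\hat\C$, branched over the images of the cone points. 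Reading off the monodromy of $f$ recovers a representation of $\pi_1(\hat\C\setminus\{b_1,\dots,b_r\})$ into $\Sigma_3$ whose image lies in a copy of $C_3$ and which is nontrivial on each meridian (again because $\theta$ is nontrivial on the elliptic generators), matching the criterion of the preceding proposition; hence $S$ is cyclic trigonal.

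**Main obstacle.**

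The only real care needed is in the bookkeeping that passes between the three incarnations of the base: the punctured sphere $\hat\C\setminus\{b_i\}$ with its free-ish fundamental group, the orbifold $O$ with $\pi_1^{\mathrm{orb}}(O) \cong \Phi$, and the Fuchsian group itself — in particular checking that "nontrivial on every meridian $x_i$" is equivalent to "nontrivial on every elliptic element of $\Phi$," which uses that the elliptic elements of a cocompact triangle-type Fuchsian group of this signature are exactly the conjugates of powers of the distinguished elliptic generators, and that these generators correspond to the meridians under the isomorphism. Establishing the existence of the hyperbolic structure on $O$ (equivalently the existence of $\Phi$ with the prescribed signature) is routine from $\chi^{\mathrm{orb}}(O)<0$, which holds once $r\geq 4$, i.e.\ $g\geq 2$ — exactly the standing hypothesis. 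Everything else is the standard dictionary between regular branched coverings and normal subgroups of (orbifold) fundamental groups.
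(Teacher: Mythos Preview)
The paper states this proposition without proof, treating it as a standard preliminary fact, so there is no argument in the paper to compare yours against. Your proof is the expected one and is correct in structure and in all essential steps: uniform ramification of order $3$ from regularity of the $C_3$-cover, Riemann--Hurwitz giving $r=g+2$, uniformization of the quotient orbifold by a Fuchsian group of the stated signature, and the dictionary between the monodromy $\omega_f$ and the epimorphism $\theta$ (with ``nontrivial on meridians'' matching ``nontrivial on elliptics'').

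One small slip worth fixing: the orbifold Euler characteristic of a sphere with $r$ cone points of order $3$ is $\chi^{\mathrm{orb}}(O)=2-r\bigl(1-\tfrac13\bigr)$, not $-2+r\bigl(1-\tfrac13\bigr)$. With the correct sign the hyperbolicity condition $\chi^{\mathrm{orb}}(O)<0$ reads $r>3$, which is precisely the conclusion you state (and which is guaranteed by $g\geq 2$). Everything else stands.
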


Consider $x_1,\hdots,x_r$ a canonical set of generators of $\pi_1(\hat{\C}\setminus \{b_1,\hdots,b_r\},o)$, i.e. $\pi_1(\hat{\C}\setminus \{b_1,\hdots,b_r\},o)$ admits the following group presentation:
$$
<x_1,\hdots,x_r \mid x_1\hdots x_r =1 >.
$$
Now the permutation $t:=(1,2,3)$ generates $C_3$ as subgroup of $\Sigma_l = {\mathcal P}\{1,2,3\}$ and $\omega_f(x_i)=t$ or $t^{-1}$ for each $i$. We denote by $m_+$ the number of generators sent to $t$ and by $m_-$ the number of generators sent to $t^{-1}$ (by $\omega$). 

The quantities $m_+$ and $m_-$ satisfy the following equalities:
\begin{equation}\label{eqn:1}
m_++m_-= r = g+2
\end{equation}
and
\begin{equation}\label{eqn:2}
m_++ 2 m_- \equiv 0 \mod 3.
\end{equation}
The first equality is just by definition and the second comes from the equality in $C_3$ given by 
$$
\omega(x_1\hdots x_r)= \id_{C_3}
$$
from which it follows that
$$
t^{m_++2m_-}= \id_{C_3}.
$$

The following proposition due to Nielsen says that $m_+$ determines the morphisms topologically \cite{Ni}.
\begin{proposition}\label{prop:nielsen} Two cyclic trigonal morphisms $f_1: S_1 \to \hat{\C}$ and $f_2: S_2 \to \hat{\C}$ are topologically equivalent if and only if $m_+(S_1,f_1) = m_+(S_2,f_2)$.
\end{proposition}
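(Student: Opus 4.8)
The plan is to recast the statement in terms of branched-cover monodromy and then exploit that the deck group $C_3$ is abelian. First I would recall the standard dictionary: a cyclic trigonal morphism $f\colon S\to\hat{\C}$ with branch set $\{b_1,\dots,b_r\}$ (where $r=g+2$ by \eqref{eqn:1}) is determined up to topological equivalence by its monodromy $\omega_f\colon\pi_1(\hat{\C}\setminus\{b_1,\dots,b_r\},o)\to C_3\leq\Sigma_3$, where two such homomorphisms count as equivalent when one is carried to the other by the isomorphism of fundamental groups induced by a homeomorphism of the $r$-punctured sphere, that is, by a braid (Hurwitz) move on a canonical set of generators, possibly composed afterwards with an automorphism of $C_3$ (a relabelling of the three sheets, which leaves the map $f$ unchanged). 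So it suffices to show that $\omega_{f_1}$ and $\omega_{f_2}$ are equivalent in this sense if and only if $m_+(S_1,f_1)=m_+(S_2,f_2)$.

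For the forward implication I would note that a Hurwitz move sends an adjacent pair $(\omega(x_i),\omega(x_{i+1}))$ of monodromy values to $\bigl(\omega(x_{i+1}),\,\omega(x_{i+1})^{-1}\omega(x_i)\omega(x_{i+1})\bigr)$, with all other entries unchanged; since $C_3$ is abelian this is simply the transposition $(\omega(x_i),\omega(x_{i+1}))\mapsto(\omega(x_{i+1}),\omega(x_i))$. Hence the mapping class group of the punctured sphere acts on the tuple $(\omega(x_1),\dots,\omega(x_r))$ only by permuting its entries, so it preserves the multiset of values and in particular the pair $(m_+,m_-)$. Postcomposing with an automorphism of $C_3$ either fixes this pair or interchanges $m_+$ with $m_-$, and since $m_++m_-=g+2$ is fixed, the invariant in all cases is just $m_+$.

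For the converse I would run the same computation in reverse: the adjacent transpositions above generate the full symmetric group on the $r$ entries, so each of $\omega_{f_1},\omega_{f_2}$ is equivalent to the normal form $x_i\mapsto t$ for $1\le i\le m_+$ and $x_i\mapsto t^{-1}$ for $m_+<i\le r$; this really is an epimorphism onto $C_3$, since $t^{m_++2m_-}=\id$ by \eqref{eqn:2}, and by \eqref{eqn:1} it depends only on $m_+$. Therefore $\omega_{f_1}$ and $\omega_{f_2}$ are equivalent, and the homeomorphism of the $r$-punctured sphere realising this equivalence lifts, via the first step, to a homeomorphism $S_1\to S_2$ conjugating $f_1$ to $f_2$.

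The step that requires real care — and that I expect to be the main obstacle — is the lifting statement underlying the dictionary: one must check that the lift of a homeomorphism of the punctured base to the punctured covers extends continuously over the preimages of the branch points, yielding a homeomorphism of the \emph{closed} surfaces that conjugates the two trigonal morphisms (equivalently, intertwines the two $C_3$-actions). This is precisely where the standing hypothesis that $\theta(x)\neq1$ for every elliptic $x$ is used: it forces each branch point to be totally ramified, so $f_1$ and $f_2$ have the identical local model $z\mapsto z^3$ above every branch point, and the lift extends across these points and remains a $C_3$-equivariant covering. Granting this, what remains is the elementary combinatorics of tuples in the abelian group $C_3$ carried out above — which is the content of Nielsen's classification \cite{Ni}.
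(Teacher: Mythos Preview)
The paper does not actually prove this proposition; it simply attributes the result to Nielsen \cite{Ni} and states it without argument. Your proof is the standard one and is correct in substance: for an abelian deck group the Hurwitz action on monodromy tuples degenerates to permutation of entries, so the multiset of local monodromies is the complete invariant, and your discussion of lifting the base homeomorphism and extending it over the totally ramified branch points is exactly what is needed to close the argument.

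One small slip worth fixing: you write that after postcomposing with an automorphism of $C_3$ ``the invariant in all cases is just $m_+$'', but the nontrivial automorphism $t\mapsto t^{-1}$ swaps $m_+$ and $m_-$, so the genuine invariant is the unordered pair $\{m_+,m_-\}$ (equivalently $\min(m_+,m_-)$), not $m_+$ alone. The paper's own statement carries the same looseness and should be read with an implicit convention such as $m_+\le m_-$; with that understanding your argument goes through.
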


Let $\M_g^{m_+=k}$ be the set of points in $\M_g$ corresponding to cyclic trigonal surfaces $(S,f)$ with topological type given by $m_+(S,f)=k$. 

\begin{proposition}[Consequence of \cite{Na} and \cite{G}]
For all $k$, $\M_g^{m_+=k}$ is connected and if $k\neq k'$
$$\M_g^{m_+=k} \cap \M_g^{m_+=k'} = \emptyset.$$
\end{proposition}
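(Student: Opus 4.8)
\emph{Proof plan.} I would prove the two assertions separately, beginning with the (shorter) disjointness statement. Suppose $[S]\in\M_g^{m_+=k}\cap\M_g^{m_+=k'}$, witnessed by cyclic trigonal morphisms $f,f': S\to\hat{\C}$ with $m_+(S,f)=k$ and $m_+(S,f')=k'$. Since we are in the range $g\geq 5$, the uniqueness theorem for trigonal morphisms (\cite{G}) lets us write $f'=\varphi\circ f$ for a suitable $\varphi\in\PSL(2,\C)$; hence $f$ and $f'$ are topologically equivalent and Proposition \ref{prop:nielsen} forces $k=m_+(S,f)=m_+(S,f')=k'$. Thus $\M_g^{m_+=k}\cap\M_g^{m_+=k'}=\emptyset$ for $k\neq k'$.

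For the connectivity I would realise $\M_g^{m_+=k}$ as a continuous image of a connected space, as in the treatment of the hyperelliptic locus in \cite{Na}. If no epimorphism with $m_+=k$ exists (equivalently, if $k$ is not compatible with \eqref{eqn:1}--\eqref{eqn:2}), then $\M_g^{m_+=k}=\emptyset$ and there is nothing to do, so assume one does. Fix a Fuchsian group $\Phi$ of signature $(0,\overset{r}{[3,\hdots,3]})$ with $r=g+2$, and fix one epimorphism $\theta:\Phi\to C_3$, nontrivial on every elliptic generator, with $m_+(\theta)=k$. The Teichm\"uller space $T(\Phi)$ is a cell, hence connected; a point of it is an isomorphism $\mu_X:\Phi\to\Phi_X$ onto a Fuchsian group $\Phi_X<\PSL(2,\R)$, and pushing $\theta$ forward through the marking gives $\theta_X:=\theta\circ\mu_X^{-1}:\Phi_X\to C_3$. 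As $\theta_X$ is again onto and nontrivial on elliptics, $\ker\theta_X$ is a genus-$g$ surface group (Riemann--Hurwitz), so $\Psi(X):=[\Hyp/\ker\theta_X]$ is a point of $\M_g^{m_+=k}$. The map $\Psi$ is continuous, so $\Psi\big(T(\Phi)\big)$ is a connected subset of $\M_g^{m_+=k}$.

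It remains to see that $\Psi$ is surjective onto $\M_g^{m_+=k}$, and here Proposition \ref{prop:nielsen} does the work. Given $[S]\in\M_g^{m_+=k}$, the Fuchsian-group description of cyclic trigonal surfaces provides a group $\Phi'$ of the same signature and an epimorphism $\theta':\Phi'\to C_3$, nontrivial on elliptics, with $m_+(\theta')=k$ and $S=\Hyp/\ker\theta'$. Any isomorphism $\mu:\Phi\to\Phi'$ presents $\Phi'$ as a point $X\in T(\Phi)$. The epimorphisms $\theta'$ and $\theta\circ\mu^{-1}$ of $\Phi'$ have the same $m_+$, so by Proposition \ref{prop:nielsen} they agree up to precomposition with an automorphism of $\Phi'$ induced by an orientation-preserving self-homeomorphism of the base orbifold $\Hyp/\Phi'$ (and postcomposition with an automorphism of $C_3$, which does not alter the kernel). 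That self-homeomorphism carries $X$ to a point $X'\in T(\Phi)$ with $\ker\theta_{X'}=\ker\theta'$, so $\Psi(X')=[S]$. Hence $\Psi\big(T(\Phi)\big)=\M_g^{m_+=k}$, and the locus is connected.

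\emph{Main obstacle.} I expect every step but one to be routine; the delicate point is the surjectivity of $\Psi$, i.e. upgrading the purely combinatorial Proposition \ref{prop:nielsen} to an identification of points in $\M_g$. Concretely, one must check that an orientation-preserving homeomorphism of the quotient orbifold realising the prescribed change of monodromy acts on $T(\Phi)$ in the expected way, so that two topologically equivalent epimorphisms yield the same point of moduli space. This is classical for Fuchsian groups of signature $(0,\overset{r}{[3,\hdots,3]})$, but it is where the real content sits; the congruences \eqref{eqn:1}--\eqref{eqn:2} and the Euler-characteristic count are straightforward by comparison.
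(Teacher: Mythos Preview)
The paper offers no proof of this proposition; it is simply recorded as a consequence of \cite{Na} and \cite{G}. Your argument is exactly the kind of proof those citations point to---connectivity via the cell structure of the Teichm\"uller space $T(\Phi)$ of signature $(0;3,\dots,3)$ combined with Nielsen's classification (Proposition~\ref{prop:nielsen}) to obtain surjectivity of $\Psi$, and disjointness via Gonz\'alez-D\'iez's uniqueness of the cyclic trigonal automorphism---so there is nothing to compare against, and what you have written is a faithful unpacking of the two references.

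Two minor comments. The step you flag as the ``main obstacle'' is correctly identified and correctly handled: the content is that the modular group of $\Phi$ acts transitively on epimorphisms $\theta:\Phi\to C_3$ with fixed $m_+$, which is precisely Proposition~\ref{prop:nielsen} read at the level of Fuchsian groups, and this is what converts the combinatorial invariant into an identification of points in $\M_g$. Your restriction to $g\ge 5$ in the disjointness half is also appropriate, since that is the range in which \cite{G} guarantees uniqueness of the $C_3$-action; for the residual genera $2\le g\le 4$ the admissible values of $m_+$ are so constrained by \eqref{eqn:1}--\eqref{eqn:2} that the question is essentially vacuous, and in any case the paper's main application (Theorem~\ref{thm:main1}) is stated for $g\ge 5$.
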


We can now state our first theorem.

\begin{theorem}\label{thm:cyclictrigonal} Let $I=\{ k \mid \M_g^{m_+=k}\neq \emptyset\}$. Then
$$
\displaystyle \bigcap_{k\in I} \overline{\M_g}^{\,m_+=k} \neq \emptyset.
$$
\end{theorem}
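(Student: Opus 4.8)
The plan is to exhibit a single stable curve $S_0\in\overline{\M_g}$ together with, for each $k\in I$, a one–parameter family of smooth genus–$g$ cyclic trigonal surfaces with $m_+=k$ that degenerates to $S_0$; since $\overline{\M_g}^{\,m_+=k}$ is the closure of $\M_g^{m_+=k}$, this puts $S_0$ in every $\overline{\M_g}^{\,m_+=k}$ and hence in the intersection. Recall the combinatorial picture: a cyclic trigonal surface of genus $g$ is a $C_3$–cover of $\hat\C$ branched over $r=g+2$ points, each carrying a sign according to whether its meridian maps to $t$ or to $t^{-1}$, with $m_+$ the number of $+$ signs and $m_+-m_-\equiv 0\pmod 3$; degenerating such a cover amounts to degenerating the $(g+2)$–pointed sphere to a stable nodal genus–$0$ curve and passing to the limiting admissible $C_3$–cover, a procedure underwritten by the properness results behind \cite{Na} and \cite{G}.

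First I would fix the downstairs curve $C_0$ to be a maximally degenerate $(g+2)$–pointed stable genus–$0$ curve of a special shape: a small ``core'' subtree carrying $2$ or $3$ of the branch points — chosen according to $g+2\bmod 3$, e.g.\ a single component carrying two marked points of signs $+$ and $-$ — to which $N\approx(g-1)/3$ ``triple bubbles'' are attached, each a subtree meeting the rest in a single node and carrying exactly three branch points. Since the three branch points inside a triple bubble contribute monodromy $t^{\pm3}=\id$, the node joining it to the core is unramified; hence the piece of $S_0$ lying over a triple bubble is a copy of a fixed elliptic curve $E_0$ (the $C_3$–cover of $\hat\C$ totally ramified over three points), glued to the rest of $S_0$ along three nodes, while the core is arranged to have monodromy onto $C_3$. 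A Riemann--Hurwitz bookkeeping confirms that the resulting $S_0$ is connected of arithmetic genus $g$.

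The crux is the following observation: labelling the three branch points on a triple bubble all $+$ or all $-$ yields \emph{the same} cover $E_0\to\hat\C$, because the two monodromy prescriptions differ only by postcomposition with the automorphism $t\mapsto t^{-1}$ of $C_3$, which alters the identification of the deck group but not the covering space nor its three distinguished node points. So flipping the signs on any subset of the triple bubbles — and, when $3\mid g+2$, also on the core, which we then take to carry three equally signed points — leaves $S_0$ unchanged as an abstract stable curve while moving $m_+$ by multiples of $3$. A short count, split according to $g+2\equiv 0,1,2\pmod 3$, shows that for the right choice of $N$ and core the values of $m_+$ obtained by flipping $0,1,\dots$ of these ``flippable units'' are exactly the admissible set $I$ (this is where having $g\ge5$, hence enough triple bubbles, is used). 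For each such sign labelling, smoothing the nodes of $C_0$ and deforming the $C_3$–cover accordingly produces a family of smooth genus–$g$ cyclic trigonal surfaces whose $m_+$ is the value read off from the labels, and which limits to $S_0$; hence $S_0\in\overline{\M_g}^{\,m_+=k}$ for every $k\in I$.

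I expect the main difficulty to be the admissible–cover bookkeeping: making precise that the limiting nodal cover really is the limit of the smooth ones, and that the resulting $S_0$ is a well-defined point of $\overline{\M_g}$ independent of the finitely many equivariant gluing choices, together with the slightly fiddly case analysis checking that the flippable triple bubbles span exactly the admissible range of $m_+$ in every genus $g\ge5$. By contrast, the collar estimate of Lemma~\ref{lem:collar} is not needed here; it is an obstruction tool, relevant in the later sections where one shows that certain degenerations cannot occur.
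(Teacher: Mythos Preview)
Your approach is correct and rests on the same essential observation as the paper's: the cyclic $C_3$--cover over a cluster of three branch points all carrying the same sign is independent of whether that sign is $+$ or $-$, so grouping the branch points into such triples yields a single nodal surface in the closure of every stratum. The paper executes this in the hyperbolic category and in the opposite direction: starting from a smooth cyclic trigonal surface with given $m_+$, it builds a specific pants decomposition of the quotient sphere adapted to the triples, pinches those curves, and then identifies the pieces of the lift one by one via Lemma~\ref{lem:unique} (the cusped surfaces $Q$, $\alpha$, $X$, plus the thrice-punctured sphere $Y$), obtaining a ``chain'' nodal surface whose isomorphism type visibly does not depend on $m_+$. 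Your version instead works in the admissible-covers framework, starts from the limit curve and smooths, and uses a ``star'' topology (a core with triple bubbles hanging off) rather than a chain; the r\^ole of Lemma~\ref{lem:unique} is played by your remark that the two monodromy labellings of a bubble differ by the automorphism $t\mapsto t^{-1}$ of $C_3$ and hence give the same pointed cover $E_0$. Both routes produce an explicit nodal surface; the paper's is more concrete (every piece is named), yours is more conceptual and avoids the hyperbolic uniqueness lemma.

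One small correction to your case split: when $g+2\equiv 1\pmod 3$ (so $g=5,8,11,\ldots$) one has $m_+\equiv 2\pmod 3$, and with unramified bubble--nodes the core must carry \emph{four} branch points (two of each sign) rather than ``2 or 3''; taking $c=4$ and $N=(g-2)/3$ bubbles the range $m_+\in\{2,5,\ldots,g\}=I$ is recovered exactly as you describe. The paper handles the same trichotomy via the three end-patterns $X+Q$, $X+\alpha+Q$, $Y$ according to $r\bmod 3$. Your final remark is also apt: Lemma~\ref{lem:collar} plays no r\^ole in this proof in the paper either; it is used only in Section~4.
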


Note that this theorem can be also be deduced using computations in \cite[Prop. 2.11]{AP}, although here we shall give a complete proof in terms of hyperbolic structures on Riemann surfaces.
  
To prove the theorem we shall show the existence of an explicit nodal surface that belongs to the completion of each of the strata. To construct this surface we need the following lemma which guarantees the unicity of specific punctured surfaces which will serve as building blocks of our nodal surface.

\begin{lemma}\label{lem:unique}
Up to isometry, there are unique (and distinct) hyperbolic complete finite area surfaces that satisfy the following properties:
\begin{enumerate}
\item\label{case1} $Q$ is a once punctured torus with an isometry of order $3$,
\item $\alpha$ is a twice punctured torus with an automorphism of order $3$ with $2$ fixed points,
\item $X$ is a $4$ times punctured sphere with one automorphism of order $3$ with one fixed point.
\end{enumerate}
\end{lemma}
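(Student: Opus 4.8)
The plan is to realize each of the three surfaces as $\Hyp/\Gamma$ for a suitable Fuchsian group and to use the rigidity of triangle-type groups together with a count of Teichm\"uller parameters. First I would observe that in each case the quotient orbifold $O = S/\langle h \rangle$ is a sphere with a small number of special points (cone points of order $3$ coming from fixed points of $h$, plus punctures coming from the punctures of $S$, which must come in free $\langle h\rangle$-orbits unless they are fixed). For case \eqref{case1}, $Q$ is a once-punctured torus with $\chi = -1$, so $Q/\langle h\rangle$ has $\chi = -1/3$; the only possibility is a sphere with one puncture and two cone points of order $3$, i.e.\ the $(3,3,\infty)$-orbifold. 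For the twice-punctured torus $\alpha$ with two fixed points, $\chi(\alpha) = -2$ and $\chi(\alpha/\langle h\rangle) = -2/3$; with two cone points of order $3$ (images of the fixed points) and the two punctures forming a single free orbit contributing one puncture downstairs, we again land on $(3,3,\infty)$ — wait, that gives $\chi = -1/3$; so instead the two punctures must each be fixed, giving a sphere with two cone points and two punctures, $(3,3,\infty,\infty)$, with $\chi = -2/3$. For the four-times-punctured sphere $X$ with one fixed point, $\chi(X) = -2$, $\chi(X/\langle h\rangle) = -2/3$; a genus-zero quotient with one cone point of order $3$ and the four punctures arranged so that the fixed point accounts for none of them forces the four punctures to split, but $h$ has order $3$ and fixes exactly one point on $\hat\C$-downstairs picture; the consistent configuration is a sphere with two cone points of order $3$ and two punctures again, or else one must check the Riemann--Hurwitz bookkeeping carefully to see that $X/\langle h\rangle$ is the $(3,3,\infty,\infty)$ orbifold with the two fixed points of $h$ on $X$ mapping to the two cone points. (I would pin down these orbifold signatures precisely via Riemann--Hurwitz, $\chi(S) = 3\chi(O) - \sum(1 - 1/3)$ over the two fixed points in cases 2 and 3 and one downstairs cone-point contribution in case 1, before proceeding.)

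Once the quotient orbifolds are identified, the key step is uniqueness. The $(3,3,\infty)$-orbifold is a triangle orbifold: its Teichm\"uller space is a point, so the hyperbolic structure on it is unique, hence $Q$ is unique up to isometry. This settles case \eqref{case1} immediately. For the orbifolds with signature $(3,3,\infty,\infty)$ (genus zero, two order-$3$ cone points, two cusps), the Teichm\"uller space is one-complex-dimensional in general, so uniqueness is \emph{not} automatic — and this is where the real content lies. The resolution is that we are not asking for an arbitrary hyperbolic structure on that orbifold, but one that is the quotient of a surface with a \emph{specified} isometric $\Z/3$ action; and moreover $\alpha$ and $X$ carry extra symmetry. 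Concretely, the twice-punctured torus $\alpha$ with an order-$3$ automorphism having two fixed points has an obvious hyperelliptic-type extra involution, or one can exploit that the underlying orbifold admits an order-$2$ symmetry exchanging the two cone points and the two cusps; imposing that symmetry cuts the one-dimensional Teichm\"uller space down to a point. I would therefore prove: the orbifold $(3,3,\infty,\infty)$ admitting a conformal involution swapping the two cone points and swapping the two cusps is unique, by passing to its further quotient, which is a $(2,3,\infty)$- or $(2,2,3,\infty)$-type triangle/quadrilateral orbifold that is rigid.

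The most delicate part, and the one I expect to be the main obstacle, is verifying that the two surfaces $\alpha$ and $X$ are genuinely \emph{distinct} and that the symmetry I use to force rigidity really is present in the relevant Teichm\"uller stratum — i.e.\ that among all hyperbolic structures on $(3,3,\infty,\infty)$ lifting to a $\Z/3$-symmetric surface of the prescribed topological type, exactly one occurs, and it occurs for each of the two topological types, with the two being non-isometric. Distinctness should follow from a numerical invariant: $\alpha$ is a \emph{torus} minus two points while $X$ is a \emph{sphere} minus four points, so they have different topology and hence cannot be isometric as surfaces-without-marking either, \emph{unless} one forgets the global topology — but a complete finite-area hyperbolic surface remembers its topology, so distinctness is in fact immediate once existence is established. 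I would then close the argument by exhibiting each surface explicitly: build the $(3,3,\infty)$ orbifold as the double of a hyperbolic triangle with one ideal vertex and two vertices of angle $2\pi/3$, take its degree-$3$ cover branched appropriately to get $Q$; and build the $(3,3,\infty,\infty)$ cases as doubles of an ideal quadrilateral-with-cone-points, with the reflection symmetry forcing the shape, then passing to the degree-$3$ cyclic cover to obtain $\alpha$ (when the two cusps are non-fixed images) or $X$ (when they are fixed) — checking in each case that the cover has the stated genus, puncture count, and number of fixed points of the deck transformation. The routine check that these covers have the claimed Euler characteristics and fixed-point data is exactly the Riemann--Hurwitz computation flagged above; I would present it compactly rather than grinding through it.
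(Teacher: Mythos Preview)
Your quotient-orbifold strategy is the right idea, but the execution goes wrong at the Riemann--Hurwitz step and this sends you down an unnecessary detour. The orbifold with signature $(0;3,3,\infty,\infty)$ has orbifold Euler characteristic
\[
2 - 2\Bigl(1-\tfrac13\Bigr) - 2 \;=\; -\tfrac{4}{3},
\]
not $-2/3$ as you write. Had you carried the arithmetic through, you would have seen that a twice-punctured torus with an order-$3$ automorphism cannot have two \emph{interior} fixed points: the two punctures are necessarily fixed (orbit sizes under an order-$3$ map are $1$ or $3$), and then $3g_O + k = 1$ forces exactly $k=1$ interior fixed point and $g_O=0$. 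Thus $\alpha/\langle h\rangle$ has signature $(0;3,\infty,\infty)$, which \emph{is} a triangle orbifold and hence rigid. The same holds for $X$: the statement says $X$ has \emph{one} fixed point (you wrote two), the four punctures split as one fixed puncture plus one free $3$-orbit, and $X/\langle h\rangle$ is again $(0;3,\infty,\infty)$. So $\alpha$ and $X$ are two different cyclic $3$-covers of the same rigid orbifold, distinguished by the monodromy at the two cusps (both nontrivial for $\alpha$, one trivial and one nontrivial for $X$), and uniqueness drops out immediately --- your proposed search for an extra involution on a one-parameter family is both unnecessary and, as you present it, unjustified (you assert the involution exists rather than deduce it from the hypotheses).

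For comparison, the paper does not pass to the quotient orbifold at all. It instead \emph{fills in} punctures to reduce to surfaces that are already known to be rigid: $Q$ is the modular torus outright; $\alpha$ is obtained from the hexagonal torus (the unique closed genus-$1$ Riemann surface with an order-$3$ conformal automorphism) by deleting two of its three fixed points; and $X$ is obtained from the thrice-punctured sphere by deleting one of the two fixed points of its order-$3$ rotation. Uniqueness of the starting surface plus transitivity of the full automorphism group on the deleted points gives uniqueness of the result. Your approach (done correctly) and the paper's are both short; yours is arguably more systematic, since all three cases reduce uniformly to ``the quotient is a triangle orbifold,'' whereas the paper treats $\alpha$ and $X$ by separate ad hoc reductions. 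Note, incidentally, that the phrase ``$2$ fixed points'' for $\alpha$ in the lemma statement is at odds with the Riemann--Hurwitz count above and with how $\alpha$ actually arises in the proof of the theorem; this may be what led you astray.
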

\begin{proof}
Case \ref{case1} is just the well known fact that the so-called modular torus is the unique punctured torus with an automorphism of order $3$. The conformal automorphism of order $3$ has $3$ fixed points, just choose one of them to remove and obtain a cusp (note there are isometries in that torus interchanging the three fixed points).

The surface $\alpha$ is obtained by considering the unique hyperbolic torus with two cusps in the conformal class of the modular torus with two of the fixed points removed and which become cusps. 

Now consider a pair of pants with three cusps as boundary. It clearly has an automorphism of order three which rotates the cusps and has $2$ fixed points. $X$ is the unique hyperbolic surface one obtains by removing one of the fixed points to obtain a cusp. Uniqueness is again guaranteed by the uniqueness of the conformal class of a thrice punctured pair of pants.
\end{proof}

\begin{proof}[Proof of Theorem \ref{thm:cyclictrigonal}]
We begin by endowing $S$ with a hyperbolic metric. Then $S$ admits an automorphism $h$ of order $3$ such that $S/<h>$ is a hyperbolic orbifold of genus $0$.

The construction of our nodal surface will be algorithmic and we begin by considering a specific pants decomposition of $S/<h>$ where the boundaries of pants are either simple closed geodesics or branched points.

%\underline{Initial step}

Let $B$ be the set of branched points $\{b_1,\hdots,b_r\}$. Let $\omega_f:  \pi_1(\hat{\C}\setminus \{b_1,\hdots,b_r\},o)  \to \Sigma_3$ be the monodromy of $f:S\to S/<h> = \hat{\C}$.

Now $B=B_+\cup B_-$ where $B_+$ (resp. $B_-$) is the subset of $B$ consisting of points $b_i$ such that $\omega_f(x_i)=t$ (resp. $\omega_f(x_i)=t^{-1}$). 

We are going to arrange a maximum number of points of $B$ into triples where each triple lies in a $B_i$. Specifically let $T:=\lfloor{\frac{\#B_+}{3}}\rfloor +\lfloor{\frac{\#B_-}{3}}\rfloor$. And set 
$\{b^k_1,b^k_2,b^k_3\}_{k=1}^{T}$ so that 
$$
\{b^k_1,b^k_2,b^k_3\} \subset B_i
$$
and each $b\in B$ lies at most one triple. 

%As $\# B \geq 6$ then $\max\{ m_+=\#B_+, m_-=\#B_-\}\geq 3$. 
\underline{Initial step}

We now construct a first pair of pants. Consider disjoint simple closed geodesics $\gamma^1_1,\gamma^1_2$ such that $\gamma^1_1,b^1_1,b^1_2$ and $\gamma^1_1,\gamma^1_2, b^1_3$ are the boundaries of embedded pairs of pants. (There are infinitely many choices for such curves.)

\begin{figure}[h]
%\ShowGrid
\leavevmode \SetLabels
\L(.395*.47) $b_1^1$\\
\L(.507*.47) $b_2^1$\\
\L(.63*.47) $b_3^1$\\
\L(.50*.1) $\gamma_1^1$\\
\L(.66*.1) $\gamma_2^1$\\
\endSetLabels
\begin{center}
\AffixLabels{\centerline{\epsfig{file =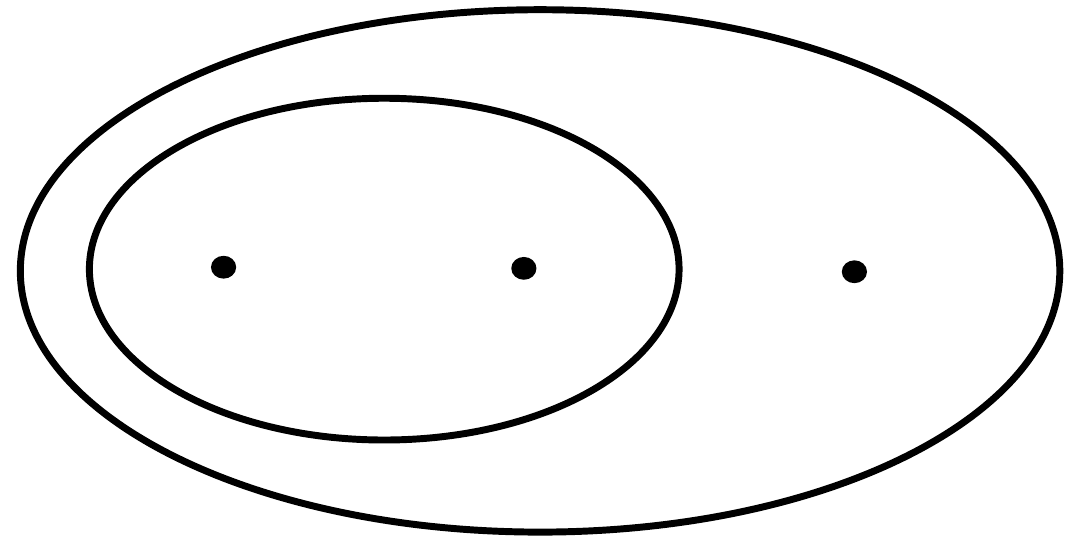,width=6cm,angle=0}}}
\vspace{-24pt}
\end{center}
\caption{The curves $\gamma_1^1$ and $\gamma_1^2$} \label{fig:step1}
\end{figure}

\underline{General step}

Now for $k\in \{2,\hdots,T\}$ we construct simple closed geodesics $\gamma_1^k,\gamma_2^k,\gamma_3^k$, belonging to the portion of $S/<h>$ from which we have removed the pants constructed previously, with the following properties:
$$\gamma^{k-1}_3, \gamma^k_1, b_1^k ;$$
$$\gamma^{k}_1, \gamma^k_2, b_2^k$$
and
$$\gamma^{k}_2, \gamma^k_3, b_3^k$$
are the boundary curves of embedded pairs of pants. (Again there are many choices for these curves.)

\begin{figure}[h]
%\ShowGrid
\leavevmode \SetLabels
\L(.49*.46) $b_1^k$\\
\L(.595*.46) $b_2^k$\\
\L(.74*.46) $b_3^k$\\
\L(.42*.31) $\gamma_3^{k-1}$\\
\L(.52*.30) $\gamma_1^k$\\
\L(.62*.29) $\gamma_2^k$\\
\L(.805*.27) $\gamma_3^k$\\
\endSetLabels
\begin{center}
\AffixLabels{\centerline{\epsfig{file =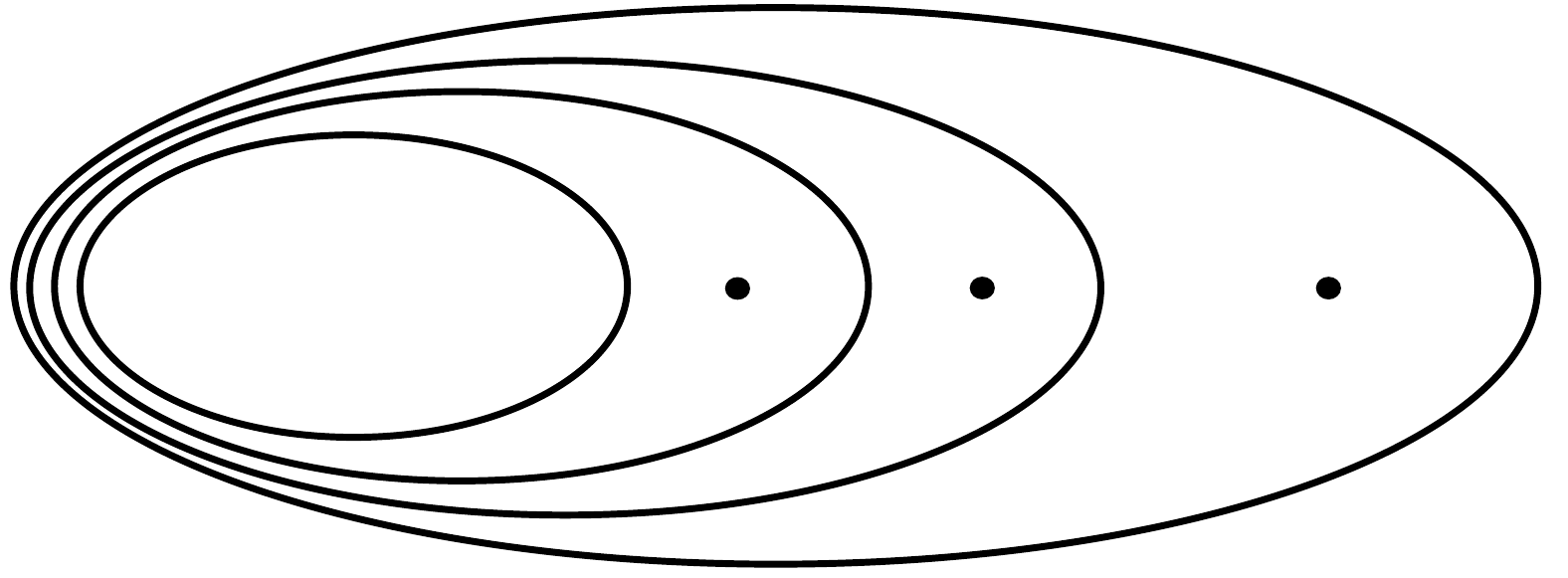,width=10cm,angle=0}}}
\vspace{-24pt}
\end{center}
\caption{The general step} \label{fig:stepk}
\end{figure}

Recall that $m_+$ and $m_-$ satisfy equations \ref{eqn:1} and \ref{eqn:2}. As such there are 3 cases to consider depending on the number of branch points that do not belong to the pants we have constructed. Following equation \ref{eqn:2}, we have 
$$
m_+\equiv m_{-} \mod 3
$$
and thus the number of points is either $0,2$ or $4$, and the number of remaining points from $B_-$ is the same as the number from $B_+$.

- If there are none, then note that the final 2 curves of our pants decomposition were in fact trivial.

- Suppose we have $1$ in both $B_+$ and $B_-$ (say $b$ and $\tilde{b}$: then the set of curves we have constructed form a full pants decomposition of $S/<h>$ and the final pair of pants is $\gamma_3^T, b, \tilde{b}$.

- Suppose we have $2$ in both $B_+$ and $B_-$ (say $b_1,b_2\in B_+$, $\tilde{b}_1,\tilde{b}_2\in B_-$). Consider disjoint curves $\gamma,\tilde{\gamma}$ such that $\gamma_3^T,\gamma,b_1$ and $\tilde{\gamma}, \tilde{b}_1, \tilde{b}_2$ form pants. These curves form a final pair of pants $\gamma, \tilde{\gamma}, b_2$.

We are now ready to construct our nodal surface. We claim that by pinching the curves of our pants decomposition on $S/<h>$ and lifting via $h$ we obtain a unique surface, independently of the monodromies of the points $b_1,\hdots,b_r$ we began with. 

We begin by lifting the first pair of pants: by construction the two branched points have the same monodromy and as such, the curve $\gamma_1^1$ lifts to a unique simple closed geodesic invariant by the isometry. Now by Riemman-Hurwitz, this implies that the lift of the pair of pants is a one holed torus with an isometry of order $3$. When we pinch $\gamma_1^1$ to length $0$, in light of lemma \ref{lem:unique}, the one holed torus becomes the modular torus $Q$.

Now we lift the pair of pants with boundaries $\gamma_1^1, b_3^1, \gamma_2^1$. The points $b_1^1, b_2^1, b_3^1$ all the have the same monodromy. If we think of $\gamma_2^1$ as being an element of $\pi_1(\hat{\C}\setminus \{b_1,\hdots,b_r\},o)$ (by giving it an orientation and chosing the point $o$ suitably) then it in light of this, it would be sent by $\omega_f$ to $\id_{\Sigma_3}$. As such it lifts to three distinct curves on $S$. The lift of the pair of pants with boundaries $\gamma_1^1,b_3^1, \gamma_2^1$ has $4$ boundary curves on $S$ and by Riemann-Hurwitz is has genus $0$. As such we obtain a four holed sphere with an autorphism of order $3$ with $1$ fixed point. Again by pinching the curves, this subsurface lifts to $X$. 

Now we argue similarly for each subsequent sequence of three pairs of pants. After pinching these lift to $X$, $\alpha$ and $X$. (The $\alpha$ appears because via Riemann-Hurwitz and the monodromy, the second pair of pants lifts to a two holed torus with an automorphism of order $3$ and two fixed points. In light of lemma \ref{lem:unique}, by pinching the subsurface is $\alpha$.)

As such the lift of our surface is 
$$
Q+X+(X+\alpha+X)+\cdots...
$$
where we somewhat loosely denote by $+$ a type of ``stable" sum of surfaces as in the following figure.

\begin{figure}[h]
%\ShowGrid
\leavevmode \SetLabels
\L(.17*-.15) $Q$\\
\L(.33*-.15) $X$\\
\L(.445*-.15) $X$\\
\L(.61*-.15) $\alpha$\\
\L(.77*-.15) $X$\\
\endSetLabels
\begin{center}
\AffixLabels{\centerline{\epsfig{file =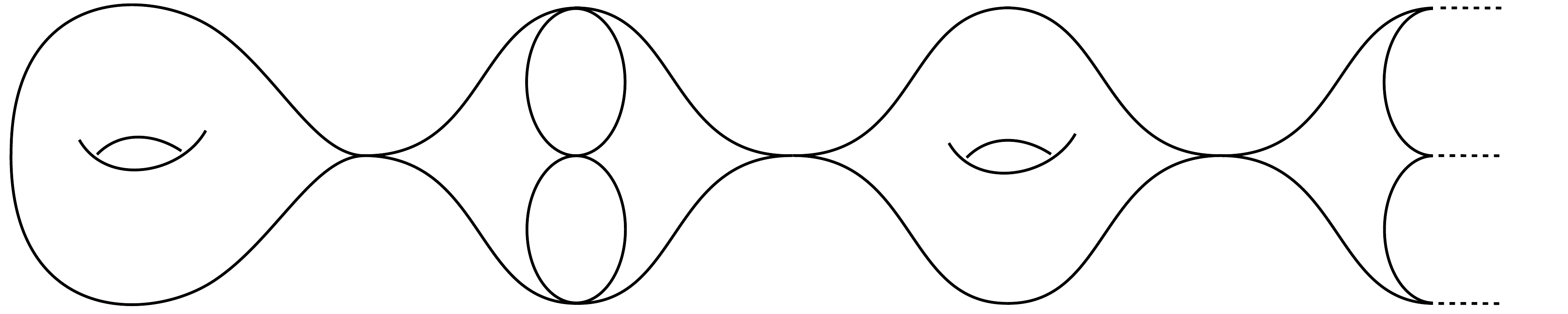,width=12cm,angle=0}}}
\vspace{-14pt}
\end{center}
\caption{The beginning of the lift} \label{fig:beginning}
\end{figure}

There are three cases to consider, depending uniquely on properties of the number $g$ (or equivalently $r$). 

\underline{Case 1: $r \equiv 0 \mod 3$}

In this case, after obtaining in the lift $Q+X$ we have lifted $T-2$ copies of $X+\alpha+X$. There is now a final triple of branch points with the same monodromy and the curve $\gamma_1^1$ surrounds two of the branch points to form a pair of pants. As in the initial lift, this pair of pants lifts to $Q$. The surface then ends with $X+Q$. The final surface is (in our loose notation)
$$
Q+X+\sum_{T-2}( X+\alpha+X ) + X+Q.
$$

\underline{Case 2: $r \equiv 1 \mod 3$}

Arguing similarly we obtain in this case
$$
Q+X+\sum_{T-1}( X+\alpha+X ) + X+\alpha+Q.
$$

\underline{Case 3: $r \equiv 2 \mod 3$}

Arguing similarly we obtain in this case
$$
Q+X+\sum_{T-1}( X+\alpha+X ) + Y
$$
where by $Y$ we mean the unique hyperbolic thrice punctured sphere. These cases are illustrated in figure \ref{fig:fullsurface}.

\begin{figure}[h]
%\ShowGrid
\leavevmode \SetLabels
\L(.82*.82) $X+Q$\\
\L(.93*.49) $X+\alpha+Q$\\
\L(.72*.16) $Y$\\
\endSetLabels
\begin{center}
\AffixLabels{\centerline{\epsfig{file =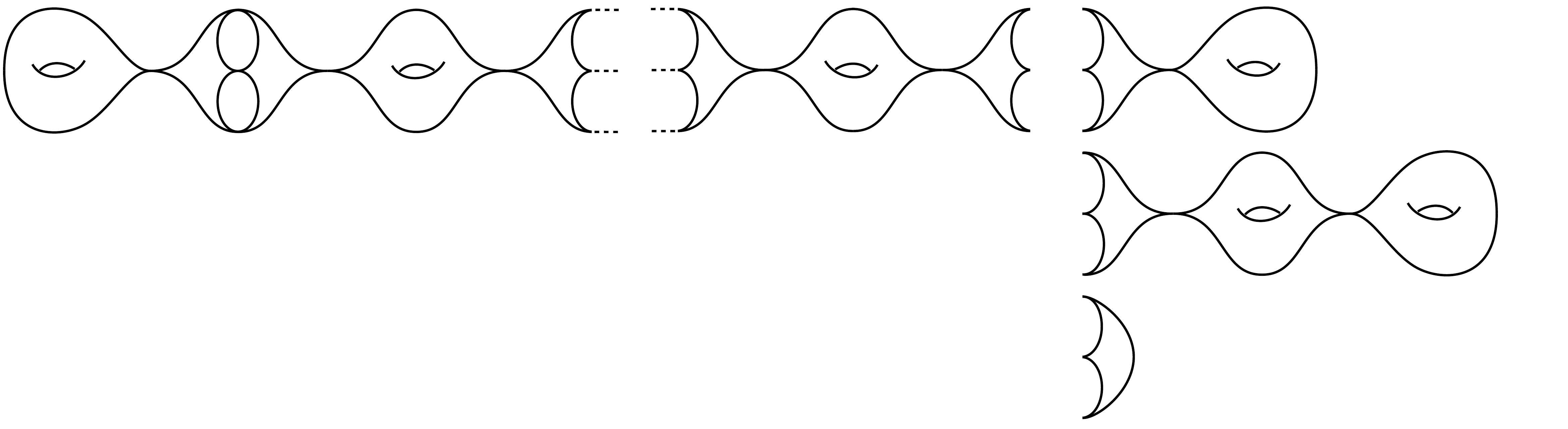,width=14cm,angle=0}}}
\vspace{-24pt}
\end{center}
\caption{The full surface with the three possible end cases} \label{fig:fullsurface}
\end{figure}

\end{proof}

\section{One dimensional cyclic $p$-gonal loci for prime $p>3$}

%\subsection{Examples of disconnected cyclic $p$-gonal strata}

In light of the above, the incurable optimist might believe that the set of cyclic $p$-gonal surfaces for $p$ prime is always connected in the completion of moduli space. In this section we show that this fails in general. 

Recall that $f:S\to \hat{\C}$ is a cyclic $p$-gonal covering if it is regular cyclic covering. We recall the well-known characterization of such coverings in terms of monodromy.

\begin{proposition} The covering $f\to \hat{\C}$ is a cyclic $p$-gonal covering if and only if there is a canonical set of generators $\{x_1,\hdots,x_r\}$ with monodromy representation as follows:
\begin{eqnarray*}
\omega_f:  \pi_1(\hat{\C}\setminus \{b_1,\hdots,b_r\},o) & \to& \Sigma_p\\
x_i &\mapsto&(1,2,\hdots,p)^{j_i},\,\,j_i\in\{1,2,\hdots,p-1\}.
\end{eqnarray*}
\end{proposition}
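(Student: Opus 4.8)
The plan is to prove the two implications by a direct application of the dictionary between finite-sheeted coverings of $\hat{\C}\setminus\{b_1,\dots,b_r\}$ and transitive permutation representations of $\pi_1(\hat{\C}\setminus\{b_1,\dots,b_r\},o)$, together with the observation (valid precisely because $p$ is prime) that a transitive action of a cyclic group of order $p$ on a $p$-element set is, up to relabelling, the regular representation.

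\textbf{($\Rightarrow$)} Suppose $f\colon S\to\hat{\C}$ is a regular cyclic covering of degree $p$ with branch set $\{b_1,\dots,b_r\}$. Removing the branch points and their preimages, $f$ restricts to an unbranched regular covering with deck group $C_p$, which acts freely and transitively on the fibre $f^{-1}(o)$. Fixing $\tilde o\in f^{-1}(o)$ and identifying $f^{-1}(o)$ with $\Z/p$ via the deck action $g\mapsto g\cdot\tilde o$, the monodromy of any loop coincides with the deck transformation realising the corresponding lift, so $\omega_f$ takes values in the copy of $C_p$ inside $\Sigma_p$ determined by this labelling. Relabelling $\Z/p=\{0,\dots,p-1\}$ as $\{1,\dots,p\}$ by $k\mapsto k+1$, the generator of this $C_p$ is exactly the $p$-cycle $(1,2,\dots,p)$; hence for any canonical set of generators $x_1,\dots,x_r$ one gets $\omega_f(x_i)=(1,2,\dots,p)^{j_i}$ with $j_i\in\{0,\dots,p-1\}$. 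Since each $b_i$ is a genuine branch point, $\omega_f(x_i)$ is a nontrivial permutation, which forces $j_i\in\{1,\dots,p-1\}$.

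\textbf{($\Leftarrow$)} Conversely, suppose $f$ is a branched covering of $\hat{\C}$ of degree $p$ for which some canonical set of generators $x_1,\dots,x_r$ satisfies $\omega_f(x_i)=(1,2,\dots,p)^{j_i}$ with each $j_i\neq 0$. Since the $x_i$ generate $\pi_1(\hat{\C}\setminus\{b_1,\dots,b_r\},o)$, the image of $\omega_f$ is contained in $\langle(1,2,\dots,p)\rangle\cong C_p$ (the relation $x_1\cdots x_r=1$ merely records $\sum_i j_i\equiv 0\bmod p$), and since some $j_i\neq 0$ this image is nontrivial, hence equals $C_p$ because $p$ is prime. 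As $(1,2,\dots,p)$ is a single $p$-cycle, $C_p$ acts transitively on $\{1,\dots,p\}$, so $S$ is connected; and since $\#C_p=p$ equals the number of sheets, this transitive action is free. Therefore the unbranched part of $f$ is a regular covering with deck group $C_p$, and the deck action extends across the branch points (each $j_i\neq 0$ with $p$ prime forces local monodromy of order $p$, i.e. a single point fully ramified over $b_i$), so $f\colon S\to\hat{\C}$ is a regular cyclic covering of degree $p$, i.e. $S$ is cyclic $p$-gonal. (If one prefers to start only from the combinatorial data $\{x_i\}$ and $\{j_i\}$, the Riemann existence theorem produces such an $f$ directly, and the same argument shows it is cyclic $p$-gonal.)

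The only real subtlety is in the ($\Rightarrow$) direction, where one must pin the monodromy down to the \emph{specific} cycle $(1,2,\dots,p)$ rather than merely some element of order $p$: this is exactly where the relabelling of the fibre by the deck group, and the primality of $p$, enter. Everything else is routine covering space theory.
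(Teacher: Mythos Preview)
Your argument is correct and is exactly the standard covering-space justification of this well-known fact. The paper does not prove this proposition at all: it simply states it as ``the well-known characterization of such coverings in terms of monodromy'' (the analogous trigonal statement earlier is attributed to \cite{ST}). So there is nothing to compare against beyond noting that your proof supplies the routine details the paper chose to omit.

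One small remark: in the ($\Rightarrow$) direction you say ``this is exactly where \dots\ the primality of $p$ enter[s]'', but primality is not actually needed there. For any regular cyclic degree-$n$ covering the monodromy lands in the regular representation of $C_n$, which after relabelling is generated by the $n$-cycle $(1,\dots,n)$. Primality is only used in the ($\Leftarrow$) direction, to conclude that a nontrivial subgroup of $C_p$ is all of $C_p$ (so the covering is connected of degree $p$) and that each nontrivial local monodromy has full order $p$. This is harmless for correctness, just a slight misattribution of where the hypothesis bites.
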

In this case note that we have
$$
r = \frac{2g}{p-1} +2.
$$
As the product of the generators is the identity we have:
$$
\sum_{i=1}^r j_i \equiv 0 \mod p.
$$

We now pass to a first example that will serve as a guide for what follows.

\subsection{Cyclic $5$-gonal surfaces in genus $4$}

Let $g=4$. Following Nielsen, there are three topological types of cyclic $5$-gonal coverings of genus $4$ surfaces. There are given by the monodromy types
$$\omega_f: \pi_1(\hat{\C}\setminus \{b_1,\hdots,b_4\},o)\to <t> \subset \Sigma_5$$
with $t:=(1,2,3,4,5)$ and with the property that the order of $\omega_f(x_i)$ is $5$. These are given by
\begin{enumerate}
\item $\omega_f(x_1)=\omega_f(x_2)=\omega_f(x_3)=t$ and $ \omega_f(x_4)=t^2$,
\item $\omega_f(x_1)=\omega_f(x_2)=t$ and $\omega_f(x_3)=\omega_f(x_4)=t^4$,
\item $\omega_f(x_1)=t$, $\omega_f(x_2)=t^2$, $\omega_f(x_3)=t^3$ and $\omega_f(x_4)=t^4$.
\end{enumerate}
Note that this means that there are three topological types of cyclic  $5$-gonal surfaces, each given by the monodromies specified above. It is a well known fact that they live in distinct connected components of the (cyclic) $5$-gonal locus in $\M_4$ (see \cite{CI1}). Specifically $i=1,2,3$ each
$$
\M_4^{i} := \{ S\in \M_4 \mid \mbox{ there exists a cyclic $5$-gonal morphism $f:S\to \hat{\C}$  of type i }\}$$
is connected with $\dim_\C \M_4^i=1$ and that for $i,j=1,2,3$
$$
\M_4^{i}\cap \M_4^j = \emptyset
$$
if $i\neq j$. 

Our observation in this setup is the following:
\begin{theorem}\label{thm;genus4}
$$
\overline{\M}_4^1 \cap \overline{\M}_4^j = \emptyset
$$
for $j\neq 1$ and
$$
\overline{\M}_4^2 \cap \overline{\M}_4^3 \neq \emptyset
$$
\end{theorem}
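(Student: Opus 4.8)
The strategy is to analyse the possible degenerations of each family $\M_4^i$ in terms of how the four branch points of the $5$-gonal morphism can collide, and to track the resulting nodal surface together with its order-$5$ automorphism. Since $\dim_\C \M_4^i = 1$, the boundary of $\overline{\M}_4^i$ is zero-dimensional, so it suffices to enumerate the finitely many nodal surfaces obtained by letting branch points coalesce and to see which of these are shared. For a cyclic $5$-gonal surface with monodromy data $(j_1,j_2,j_3,j_4)$, a degeneration corresponds to a subset $A\subseteq\{1,2,3,4\}$ of branch points moving together; the limit is obtained by pinching the simple closed geodesic on $S/\langle h\rangle=\hat\C$ that encircles exactly the points of $A$. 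The $5$-gonal cover restricted to the disk bounded by this curve depends only on the multiset $\{j_i : i\in A\}$, and (as in the proof of Theorem \ref{thm:cyclictrigonal}) by Riemann--Hurwitz plus the uniqueness statements for low-complexity hyperbolic pieces in the spirit of Lemma \ref{lem:unique}, the resulting nodal summand is determined by $\sum_{i\in A} j_i \bmod 5$ and the sizes involved.

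First I would set up the bookkeeping: with $r=4$, a pair of branch points $b_i,b_j$ can be pinched off, producing a node, precisely when the loop around both of them lifts to $5$ disjoint curves on $S$ — i.e.\ when $j_i+j_j\equiv 0\bmod 5$ — and otherwise it lifts to a connected curve and the quotient piece carries a residual order-$5$ rotation. For type $1$, $(j_1,j_2,j_3,j_4)=(1,1,1,2)$: one checks that no pair sums to $0$ modulo $5$ (the possible pair-sums are $2$ and $3$), so the only available one-parameter degeneration splits the four points as $3+1$, and the nodal surface one obtains has a component carrying an order-$5$ automorphism with a single fixed point coming from a triple of equal monodromies — structurally different from anything that types $2$ or $3$ can produce. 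Concretely I would argue that the fixed point through which a pinched curve would have to pass on the covering surface forces, via Lemma \ref{lem:collar}, that the pinched geodesic cannot shrink to zero while remaining through a fixed point of an order-$5$ automorphism, which rules out the shared degeneration and gives the first displayed equation $\overline{\M}_4^1\cap\overline{\M}_4^j=\emptyset$.

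For the second assertion I would exhibit an explicit common nodal surface in $\overline{\M}_4^2\cap\overline{\M}_4^3$. Type $2$ has data $(1,1,4,4)$, and here the pairs $\{b_1,b_3\}$, $\{b_1,b_4\}$, $\{b_2,b_3\}$, $\{b_2,b_4\}$ all have monodromy sum $\equiv 0\bmod 5$; type $3$ has data $(1,2,3,4)$, and here $\{b_1,b_4\}$ and $\{b_2,b_3\}$ have sum $\equiv 0\bmod 5$. In both cases one can pinch two disjoint separating curves each enclosing a compatible pair, degenerating $S/\langle h\rangle$ to a chain of three thrice-punctured spheres (two of the "punctures" on the outer pieces being the coalesced branch points, realised as cusps where the cover is totally ramified, the middle piece being unramified). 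Lifting through the degree-$5$ cover and using the uniqueness of the hyperbolic thrice-punctured sphere and its order-$5$ symmetric covers, I would check that type $2$ and type $3$ yield the \emph{same} stable nodal Riemann surface with the same order-$5$ action — the point being that the two "ends", each built from a pair of branch points with opposite monodromy exponents ($\{1,4\}$ in one presentation, $\{1,1\}$ vs.\ $\{4,4\}$ reorganised via the identity $\sum j_i\equiv 0$ in the other), become isometric after pinching. Producing this surface and verifying the match of automorphism actions on both sides is the main obstacle: one has to be careful that topological equivalence of the limiting $5$-gonal covers (Nielsen-type invariant applied to each surviving piece) genuinely coincides, rather than merely the underlying nodal curves agreeing; I would handle this by reducing, piece by piece, to the classification of order-$5$ cyclic covers of the thrice-punctured sphere branched over the three punctures, where there is a unique such cover with the required ramification, hence a forced coincidence.
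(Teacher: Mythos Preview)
Your overall strategy --- enumerate the finitely many nodal surfaces on the boundary of each $\overline{\M}_4^i$ and compare --- is exactly what the paper does. But two concrete errors in the execution derail the argument.

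First, for type $1$: you claim that since no pair $j_i+j_j\equiv 0\pmod 5$, ``the only available one-parameter degeneration splits the four points as $3+1$.'' This is wrong on both counts. On the quotient $S/\langle h\rangle$, which is a sphere with four order-$5$ orbifold points, every essential simple closed geodesic separates the four cone points $2+2$; there is no essential curve enclosing three of them (a curve enclosing one point is peripheral). Moreover, the condition $j_i+j_j\equiv 0\pmod 5$ is \emph{not} a prerequisite for pinching: it only determines whether the lift of the curve to $S$ has five components or one. When the sum is nonzero the lift is a single curve, and pinching it is a perfectly good degeneration to a one-noded stable surface. The paper carries this out: for type~$1$ the three $2+2$ splittings all give the pair of monodromies $\{t,t\}$ and $\{t,t^2\}$, so the unique boundary point is a surface $P_1+P_2$ built from two genus-$2$ one-cusped pieces distinguished by the rotation indices of the order-$5$ automorphism at its two fixed points.

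Second, for types $2$ and $3$: you propose to ``pinch two disjoint separating curves \ldots\ degenerating $S/\langle h\rangle$ to a chain of three thrice-punctured spheres.'' But a four-punctured sphere admits only one curve in any pants decomposition ($3g-3+n=1$), so there is no such chain. The correct analysis, as in the paper, is that each boundary point corresponds to a single $2+2$ splitting; the two halves lift via the monodromy data to one of three isometry types $P_1,P_2,P_3$ (where $P_3$ is the five-cusped sphere, arising exactly when the pair sums to $0\pmod 5$). For type~$2$ the splittings give $P_1+P_1$ or $P_3+P_3$; for type~$3$ they give $P_2+P_2$ or $P_3+P_3$. Thus $P_3+P_3$ is the common boundary point, while $P_1+P_2$ appears in neither list, proving both assertions. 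Your proposal never arrives at this classification because the geometry of the pinching is mis-set-up from the start.
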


\begin{proof}
In this proof, we assume that our surfaces are endowed with their unique hyperbolic metrics. In particular the cyclic $5$-gonal covering becomes an automorphism of the hyperbolic metrics.

Assume that $S \in \overline{\M}_4^j \setminus \M_4$. Then there exists for $k=1,j$ sequences $\{S^{(k)}_i\in \M_4\} \to S$. Denote $\mu^{(k)}_i\subset S_i$ the multicurve whose length approaches $0$ as $i\to \infty$. 

If we denote $a_i^{(k)}$ the cyclic $5$-gonal automorphism of $S_i^{(k)}$, the multicurves $\mu_i^{(k)}$ must be $a_i^{(k)}$-invariant. In particular, it is important to observe that $\mu_i^{(k)}$ is the lift of a simple closed geodesic on $S_i^{(k)}/<a_i^{(k)}>$. This follows from the fact that $\mu_i^{(k)}$ descends on $S_i^{(k)}/<a_i^{(k)}>$ to a connected curve whose length must also go to $0$ and by Lemma \ref{lem:collar}, this curve cannot pass through the fixed points of order $5$.

There are three different topological types of simple closed geodesics on an orbifold of genus $0$ with $4$ points of order $5$ (the quotients $S_i^{(k)}/<a_i^{(k)}>$ are all of this type). Observe that there is a unique genus $0$ hyperbolic orbifold with one cusp and two orbifold points of order $5$. As such, there are at most three possible stable surfaces (up to isometry) in each $\overline{\M}_4^{(k)} \setminus \M_4$. We will describe these surfaces for each stratum.

Each simple closed geodesic on a $S_i^{(k)}/<a_i^{(k)}>$ surrounds two orbifold points on one side and two on the other. Once we pinch, we obtain pants with two orbifold points and a cusp. We are interested in the isometry types of surfaces that these pants lift to. Their isometry types are clearly determined by the monodromies but also clearly different monodromies can lift to isometric pieces. For instance, if the two monodromies are $t,t$ or $t^{-1},t^{-1}$, then the pants lift to isometric pieces. We claim that the pants can lift to exactly three isometrically distinct surfaces:

\underline{Case 1: $\{t,t\}, \{t^{-1},t^{-1}\}$}

In this case we get a genus $2$ surface with one cusp with an automorphism of order $5$ with two fixed points with the same rotation index. Forgetting the cusp, this is a conformally the unique genus $2$ surface with a conformal automorphism of order $5$. We denote this surface by $P_1$.

\underline{Case 2: $\{t,t^2\},\{t,t^3\},\{t^2,t^4\},\{t^3,t^4\} $}

In this case we get a genus $2$ surface with one cusp with an automorphism of order $5$ with two fixed points but with the different rotation indices. Note that although this surface is conformally equivalent to the one in the previous case when one forgets the cusp, they are not isometric as the topological types of the coverings are different, and a result by \cite{G}  tells us that there is only one topological type of cyclic coverings from a given surface of genus $2$ to the sphere. We denote this surface by $P_2$.

\underline{Case 3: $\{t,t^{-1}\}=\{t,t^4\}, \{t^2,t^3\} $}

In this case, we obtain a sphere with $5$ cusps with an automorphism of order $5$ with two fixed points and which permutes the cusps. We denote this surface by $P_3$.

Let us pause for a moment to consider the geometries of $P_1$, $P_2$ and $P_3$. They can be constructed as follows. Consider the unique regular hyperbolic ideal pentagon. (In figures \ref{fig:5gons} we have schematically drawn this pentagon as Euclidean.) Now paste two copies along a common edge ``without" shearing, i.e, such that the geodesic between the two centers of the pentagons meets the common edge at a right angle. This gives an octogon. All three surfaces can now be obtained by pasting the octogon in different ways as illustrated in the figures. The fact that these are indeed the correct surfaces follows from the fact that they have the appropriate isometry groups which descend to the pants with the appropriate monodromies and by the uniqueness arguments outlined above.

\begin{figure}[h]
%\ShowGrid
\leavevmode \SetLabels
\L(.00*.38) $a_1$\\
\L(.00*.62) $a_1$\\
\L(.04*.09) $a_2$\\
\L(.04*.90) $a_2$\\
\L(.205*.09) $a_3$\\
\L(.205*.90) $a_3$\\
\L(.25*.38) $a_4$\\
\L(.25*.62) $a_4$\\
\L(.135*.253) ${\,}_1$\\
\L(.125*.685) ${\,}_1$\\
\L(.367*.38) $a_1$\\
\L(.367*.62) $a_2$\\
\L(.407*.09) $a_2$\\
\L(.407*.90) $a_4$\\
\L(.572*.09) $a_3$\\
\L(.572*.90) $a_1$\\
\L(.617*.38) $a_4$\\
\L(.617*.62) $a_3$\\
\L(.502*.253) ${\,}_2$\\
\L(.492*.685) ${\,}_1$\\
\L(.734*.38) $a_1$\\
\L(.734*.62) $a_4$\\
\L(.774*.09) $a_2$\\
\L(.774*.90) $a_3$\\
\L(.939*.09) $a_3$\\
\L(.939*.90) $a_2$\\
\L(.984*.38) $a_4$\\
\L(.984*.62) $a_1$\\
\L(.869*.253) ${\,}_4$\\
\L(.859*.685) ${\,}_1$\\
\endSetLabels
\begin{center}
\AffixLabels{\centerline{\epsfig{file =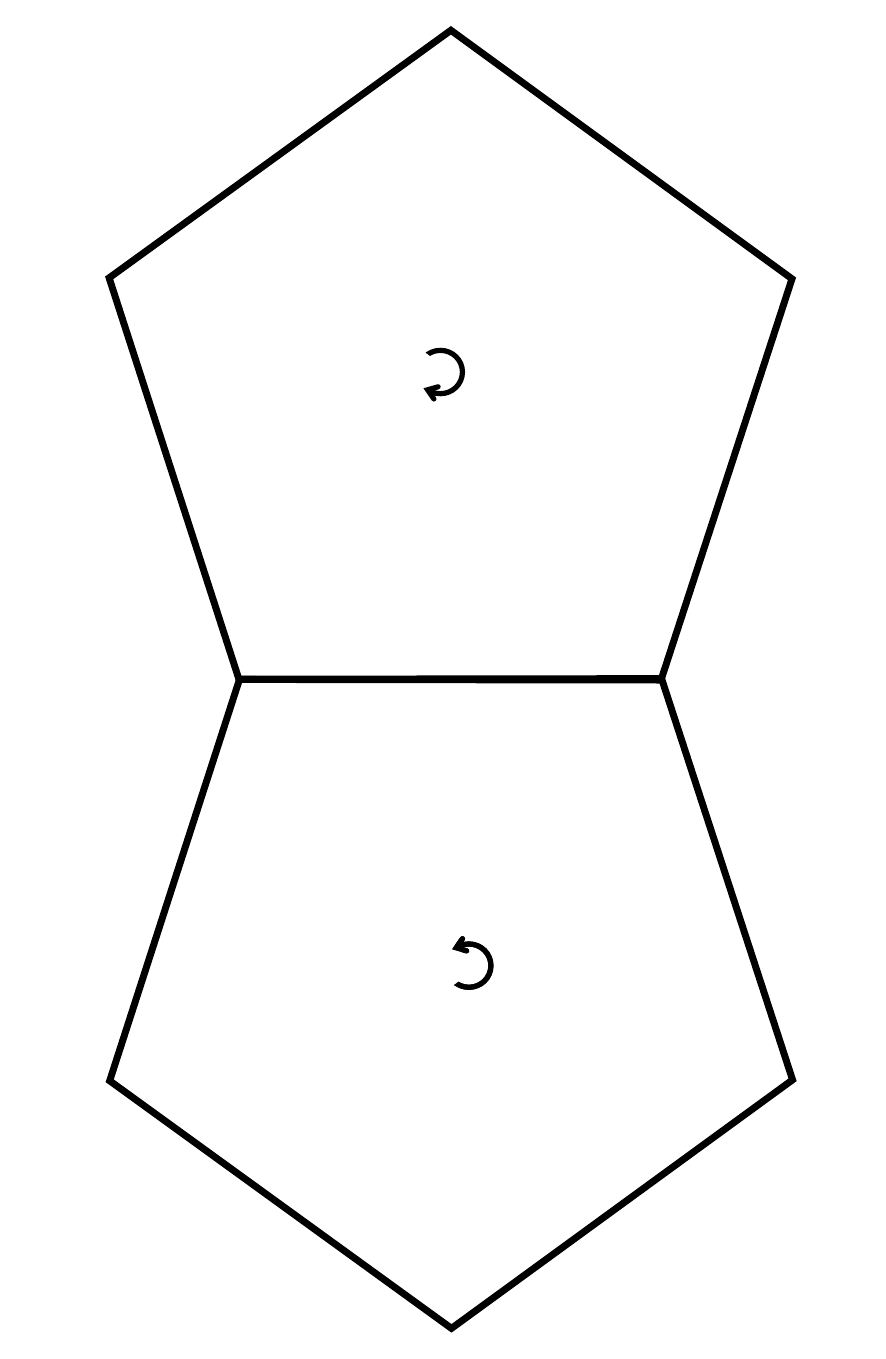,width=5.5cm,angle=0},\epsfig{file =5gons.pdf,width=5.5cm,angle=0},\epsfig{file =5gons.pdf,width=5.5cm,angle=0}}}
\vspace{-24pt}
\end{center}
\caption{The surfaces $P_3$ (sphere), $P_1$ (torus), $P_2$ (genus $2$)} \label{fig:5gons}
\end{figure}

We now look at which surfaces can lie on the boundary of the different strata. 

For $ \M_4^1 $ there is only one such surface as all three choices of simple closed geodesic surround two orbifold points whose monodromy is $t$. The other pair of pants has points with monodromy $\{t,t^2\}$. Thus the surface is given by a copy of $P_1$ and a copy of $P_2$ which are glued at their cusps. We denote the surface thus obtained somewhat loosely by $P_1 + P_2$.

For $ \M_4^2$ there are two distinct possibilities, i.e., the monodromies are given by the pairs $\{t,t\},\{t^{-1},t^{-1}\}$ or $\{t,t^{-1}\}, \{t,t^{-1}\}$. In the first case we obtain $P_1+P_1$ and in the other $P_3+ P_3$.

For $ \M_4^3$ we obtain also only two distinct possibilities, i.e., the monodromies are given by the pairs $\{t,t^2\},\{t^{3},t^{4}\}$ or $\{t,t^{3}\}, \{t^2,t^{4}\}$. This gives $P_2 + P_2$ and $P_3+P_3$.

By the above analysis it is clear that $ \M_4^2$ and $ \M_4^3$ meet at the boundary (at a unique point $P_3+P_3$) and $\M_4^1$ is disjoint from the other two.
\end{proof}

\subsection{$p$-gonal with $p$ prime and $g=p-1$}

We now consider a generalization of the above example. The proof follows the same outline is indeed almost identical. 

Let $g=p-1$. Again following Nielsen, we can classify topological types of cyclic $p$-gonal coverings of genus $p-1$ surfaces. There are given by the monodromy types
$$\omega_f: \pi_1(\hat{\C}\setminus \{b_1,b_2,b_3,b_4\},o)\to <t> \subset \Sigma_p$$
with $t:=(1,2,\hdots,p)$ and with the property that the order of $\omega_f(x_i)$ is $p$. For simplicity, we have indexed the $\omega_f$ by their type. These are given by 
\begin{enumerate}
\item $\omega_1(x_1)=\omega_1(x_2)=\omega_1(x_3)=t$ and $\omega_1(x_4)=t^{p-3}$,
\item $\omega_2(x_1)=\omega_2(x_2)=t$, and $\omega_2(x_3)=\omega_2(x_4)=t^{p-1}$,
\item $\omega_{3,i}(x_1)=t$, $\omega_{3,i}(x_2)=t^i$, $\omega_{3,i}(x_3)=t^{-i}$ and $\omega_{3,i}(x_4)=t^{p-1}$,
\item $\omega_{4,i}(x_1)=t$, $\omega_{4,i}(x_2)=t$, $\omega_{4,i}(x_3)=t^{i}$ and $\omega_{3,i}(x_4)=t^{p-2-i}$,
\item $\omega_{5,i,j}(x_1)=t$, $\omega_{5,i,j}(x_2)=t^i$, $\omega_{5,i,j}(x_3)=t^{j}$ and $\omega_{5,i,j}(x_4)=t^{p-1-i-j}$.
\end{enumerate}

Note that types 3, 4, and 5 contain several subtypes of monodromies. As before we consider the strata of moduli space corresponding to each type. Specifically we denote
$$
\overline{\M}_{p-1}^{\,I}:=S\in \M_{p-1} \mid \mbox{ there exists a  $p$-gonal morphism $f:S\to \hat{\C}$  of type I}\}
$$

We can now state our main theorem.
\begin{theorem}\label{thm:p-1}
$\overline{\M}_{p-1}^{\,(5,i,j)}$ is completely isolated in $\overline{\BB_g}$.
\end{theorem}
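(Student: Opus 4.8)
The plan is to mimic the structure of the genus $4$ argument (Theorem~\ref{thm;genus4}) but in the setting $g=p-1$, $r=4$. First I would observe that any surface $S$ in $\overline{\M}_{p-1}^{\,(5,i,j)}\setminus\M_{p-1}$ is a limit of a sequence $S_n\in\M_{p-1}$ carrying cyclic $p$-gonal automorphisms $a_n$, and that the pinched multicurve $\mu_n$ must be $a_n$-invariant. By Lemma~\ref{lem:collar} (applied with $d=p>2$) the image of $\mu_n$ in the quotient orbifold $S_n/\langle a_n\rangle$ cannot pass through any of the four order-$p$ cone points; since that orbifold has genus $0$ with exactly $4$ cone points, the only simple closed geodesics up to topological type are those separating the cone points into a $2{+}2$ partition. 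Hence the stable surface on the boundary is obtained by pinching one such geodesic, i.e.\ it is a union of two thrice-punctured pieces, each lifting a pair of pants with two order-$p$ cone points and one cusp, glued along the lifted cusps. The key classification step is then exactly as in the genus $4$ case: the isometry type of the lift of such a pair of pants is determined by the pair of rotation numbers $\{a,b\}$ at the two cone points (up to simultaneous inversion), because the thrice-punctured pair of pants is conformally rigid; one gets either a cusped genus-$\tfrac{p-1}{2}$ surface (when $a+b\not\equiv0$, with the two sub-cases of ``same'' vs.\ ``opposite'' rotation data distinguishable via the Gonz\'alez-Diez rigidity result \cite{G} as in the proof of Theorem~\ref{thm;genus4}) or a $p$-cusped sphere (when $a+b\equiv0\bmod p$).

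Next I would enumerate, for the type $(5,i,j)$ monodromy $\omega(x_1)=t,\ \omega(x_2)=t^i,\ \omega(x_3)=t^j,\ \omega(x_4)=t^{p-1-i-j}$, the three possible $2{+}2$ groupings of $\{1,i,j,p-1-i-j\}$ and record, for each, the pair of boundary pieces and their rotation data. The crucial point to extract is that, for the \emph{generic} choice of $i,j$ (which is what one needs: one isolated stratum per admissible $(i,j)$, not every one), \emph{no} grouping yields a $p$-cusped sphere $P_3$ as one of the two pieces --- that happens only when one of the three pairings has exponents summing to $0\bmod p$, i.e.\ when $\{i,j\}$, $\{1,i\}$ or $\{1,j\}$ is a pair of the form $\{a,p-a\}$, which excludes only finitely many lines in the $(i,j)$ parameter space. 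Likewise one checks that the resulting boundary surfaces have exponent-pairs that force them into the ``genus $\tfrac{p-1}{2}$ with distinct rotation indices'' case $P_2$, never the ``equal rotation indices'' case $P_1$. One then argues that the boundary stratum $\overline{\M}_{p-1}^{\,(5,i,j)}\setminus\M_{p-1}$ for such $(i,j)$ consists of stable surfaces of the form $P_2+P_2$ (with prescribed rotation data) that cannot arise as a boundary point of any \emph{other} stratum: for each other type $\mathrm{I}$ one lists its $2{+}2$ pinchings and notes that each produces either a $P_1$ summand, or a $P_3$ summand, or a $P_2+P_2$ with rotation data not matching the $(5,i,j)$ one, invoking \cite{G} to see that the rotation data is a genuine isometry invariant of the summands. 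This is where the hypothesis $p\ge 11$ prime enters substantively: one needs enough room in the exponent lattice $\Z/p\Z$ for an admissible $(i,j)$ avoiding all the finitely many ``bad'' coincidences, and one needs $g=p-1\ge 5$ (hence $p\ge 7$, and then primality plus a counting argument pushes this to $11$) so that the $1$-dimensional strata in question actually exist and are not forced to coincide for small $p$, as remarked after Theorem~\ref{thm:p-1}.

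Finally I would assemble these observations: since the closure of $\overline{\M}_{p-1}^{\,(5,i,j)}$ in $\overline{\M}_g$ meets no other stratum of $\overline{\BB_g}$ --- neither in the interior $\M_g$ (by the known disconnectedness results \cite{CI2}, \cite{CI3} for these strata, which I would cite, analogous to \cite{CI1} in the genus $4$ case) nor on the boundary (by the stable-surface bookkeeping above) --- the stratum is completely isolated in $\overline{\BB_g}$, which is the assertion.

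\textbf{Main obstacle.} I expect the technical heart to be the rigidity/bookkeeping step: proving that two stable surfaces built from the \emph{same} conformal pieces ($P_1,P_2,P_3$) but \emph{different} rotation data at the cone points (or different gluing patterns of cusps) are genuinely non-isometric, and then running the finite but slightly delicate case analysis over all five types and all their sub-types and all $2{+}2$ pinchings to verify that the $(5,i,j)$ boundary data is never reproduced. This is exactly the place where the genus $4$ proof leaned on \cite{G} and on the explicit pentagon construction; in the general case one does not have such an explicit picture, so the argument must be made purely via Riemann--Hurwitz, the rotation-number invariant, and conformal rigidity of the thrice-punctured sphere, and one must be careful that the count of admissible generic $(i,j)$ is nonempty, which is what pins down the lower bound $p\ge 11$.
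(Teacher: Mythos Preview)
Your overall strategy matches the paper's: reduce to pinching a single curve on the four--cone-pointed quotient orbifold, lift the two resulting $(p,p,\infty)$ pieces, and compare the finite list of boundary stable surfaces across the five monodromy types. There is, however, a genuine gap, and a notational issue worth flagging.

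The gap: your bookkeeping only shows that $\overline{\M}_{p-1}^{\,(5,i,j)}$ is isolated among the \emph{$p$-gonal} strata. The theorem asserts isolation in all of $\overline{\BB_g}$, so you must also rule out that a boundary nodal surface $S$ lies in the closure of some \emph{non-$p$-gonal} equisymmetric stratum, equivalently that $\mathrm{Aut}(S)$ is no larger than the cyclic group generated by the $p$-gonal automorphism $h$. You invoke \cite{CI2} for isolation in the interior, which is correct, but give no analogue at the boundary. The paper supplies this missing step: by \cite{G} the $p$-gonal automorphism is normal in $\mathrm{Aut}(S)$, so any extra automorphism $h'$ descends to $S/\langle h\rangle$ and hence to an automorphism $\bar a$ of each $O_{p,p,\infty}$ piece compatible with the monodromy $\theta$; one then checks that for the specific monodromies arising from Type~5 pieces no such nontrivial $\bar a$ exists. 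Without this normality/descent argument the proof is incomplete.

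The notational issue: your trichotomy $P_1,P_2,P_3$ is an artifact of $p=5$. For general $p$ the lift of $O_{p,p,\infty}$ with monodromy type $(1,j)$ gives $\frac{p+1}{2}$ pairwise non-isometric pieces $P_j$, distinguished (again via \cite{G}) by the class of $j$ under $j\sim j^{-1}$ in $(\Z/p\Z)^\times$. The paper records the boundary surfaces directly as $P_a+P_b$ with explicit subscripts computed from $i,j$, and the comparison between Type~5 and Types~1--4, and between distinct $(5,i,j)$ and $(5,i',j')$, is a case check on these subscripts. Your phrases ``with prescribed rotation data'' and ``rotation data not matching'' suggest you have the right invariant in mind, but framing everything through a coarse same/different dichotomy obscures exactly the case analysis you must carry out; in particular, showing that two distinct Type~5 strata have disjoint boundaries requires more than knowing both land in what you call ``$P_2+P_2$''.
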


Observe that for the theorem to be true, $\M_{p-1}^{\,(5,i,j)}$ must be completely isolated in ${\BB_g}$ and this is true as was shown in \cite{CI2}.

\begin{proof}
Our first observation is that up to isometry, there are exactly $\frac{p+1}{2}$ hyperbolic surfaces with an automorphism of order $p$ with two fixed points and whose quotient is a sphere with a single cusp. To see this, we will generalize the case by case analysis of the lifts of the pairs of pants in the proof of Theorem \ref{thm;genus4}.

Consider $O_{p,p,\infty}$ be the (hyperbolic) orbifold of genus $0$ with two orbifold points of order $p$ and a cusp. The cyclic $p$-gonal coverings are given by the monodromies 
$$
\theta: \pi_1{\mathrm{Orb}}(O_{p,p,\infty}) = <y_1,y_2 \mid y_1^p=y_2^p>  \to  <t>\subset \Sigma_p$$
(where $\pi_1{\mathrm{Orb}}$ denotes the orbifold fundamental group) and we define a map 
$$
(y_1,y_2) \mapsto (\theta(y_1),\theta(y_2))= (t^a,t^b).
$$
By the result of Gonzalez-Diez \cite{G} , two such maps $(t^a,t^b)$ and $(t^{a'},t^{b'})$ induce equivalent surfaces if and only if there exists a $c$ such that
$$
a'\equiv ca\mod (p)
$$
and
$$
b'\equiv cb \mod (p) \mbox{ or } b' \equiv c b^{-1} \mod (p).
$$
We denote the monodromy types by $(i,j)$ if it is represented by $(t^i,t^j)$. Observe that in each equivalence class, there is a representative of type $(1,j)$.

We denote $P_j$ the covering of $O_{p,p,\infty}$ given by the monodromy of type $(1,j)$. 

We proceed as in the example and we now analyze the surfaces obtained at the limit in the different types discussed in the beginning of the section. 

\underline{Type 1:}
Here we obtain $P_1+P_{p-3}$ as we have the lift of a $O_{p,p,\infty}$ of type $(1,1)$ and one of type $(1,p-3)$. We proceed in the same way for each of the subsequent types.

\underline{Type 2:} $P_1+P_1$, $P_{p-1}+P_{p-1}$ 

\underline{Type 3:} $P_{p-1}+P_{p-1}$, $P_i+P_i$, $P_{-i}+P_{-i}$

where $2\leq i \leq \frac{p-1}{2}$

\underline{Type 4:} $P_1+P_{\frac{p-i-2}{i}}$, $P_{i}+P_{p-i-2}$

where $2\leq i \leq \frac{p-1}{2}$

\underline{Type 5:} $P_i+P_{-1-\frac{i+1}{j}}$, $P_j+P_{-1-\frac{j+1}{i}}$, 
$P_{\frac{j}{i}}+P_{p-1 - i -j}$

where $2\leq i \leq \frac{p-1}{2}$, $i<j\leq p-3$ with $p-1-i-j \not\in \{1,i,j,p-1,-i,-j\}$

Through the equivalences above, it is straightforward to check that the surfaces appearing in Type 5 do not appear in any of the other cases. It remains to show that they are distinct from each other.

Let us show that the surfaces of type $5$ are distinct for distinct equisymmetric sets in $\M_{p-1}$. We must prove that if either:

1. $P_i+P_{-1-\frac{i+1}{j}} = P_i'+P_{-1-\frac{i'+1}{j'}}$,

2. $P_i+P_{-1-\frac{i+1}{j}} = P_{\frac{j'}{i'}}+P_{p-1 - i' -j'}$ or

3. $P_{\frac{j}{i}}+P_{p-1 - i -j} = P_{\frac{j}{i}}+P_{p-1 - i -j}$ 

then $\M_{p-1}^{\,(5,i,j)} = \M_{p-1}^{\,(5,i',j')}$.

Let us assume that we are in the situation described in the point 1 (the other cases are similar). If $i = i'$ and $-1-\frac{i+1}{j} = -1-\frac{i'+1}{j'}$ then it is clear that $(i,j) = (i',j')$. If $i = -1-\frac{i'+1}{j'}$ and $i' = -1-\frac{i+1}{j}$ then using automorphisms of $\pi_1(\hat{\C}\setminus \{b_1,b_2,b_3,b_4\},o)$ and $C_p$ and elementar number theory it is easy to show that $\omega_{5,i,j}$ is equivalent to $\omega_{5,i',j'}$. 

This proves that the sets $\overline{\M}_{p-1}^{\,(5,i,j)}$ are isolated among the set of cyclic $p$-gonals. We now show that they cannot meet another type of surface from the branch locus on the boundary.

Suppose the contrary. The limit surface $S$ will have then $h$ the $p$-gonal automorphism and an automorphism of different type, i.e., non $p$-gonal. This second automorphism $h'$ will induce an automorphism on the quotient $S/h$ because $h$ is normal inside the automorphism group of the surface by \cite{G} . Now $h'$ descends to an autormorphism $\bar{a}:O_{p,p,\infty}$ such that 
$$
\bar{a}_* : \pi_1\mathrm{Orb} (O_{p,p,\infty}) \to \pi_1\mathrm{Orb} (O_{p,p,\infty}) 
$$
has the property $\bar{a}_* \circ \theta = \theta$ (where $\theta$ is the monodromy of the piece $O_{p,p,\infty}$).

However the monodromies $\theta$ for the $P_i$ appearing in the boundary of strata of type $5$ do not allow such properties. 
\end{proof}

We remark that this theorem provides a number of one dimensional completely isolated strata. Via a straightforward calculation this number can be shown to be quadratic in $p$ but note that they are not necessarily distinct in $\mathcal{M}_{g}$ as several could correspond to the same strata. 

\addcontentsline{toc}{section}{References}
\bibliographystyle{Hugo}
\def\cprime{$'$}

{\em Addresses:}\\
Antonio F. Costa, Departamento de Matematicas Fundamentales, Facultad de Ciencias, Senda del rey, 9, UNED, Madrid, Spain\\
Milagros Izquierdo, Department of Mathematics, University of Linkoping, Sweden\\
Hugo Parlier, Department of Mathematics, University of Fribourg, Switzerland \\
{\em Email:}
\href{mailto:acosta@mat.uned.es}{acosta@mat.uned.es}
 \href{mailto:milagros.izquierdo@liu.se}{milagros.izquierdo@liu.se}
 \href{mailto:hugo.parlier@unifr.ch}{hugo.parlier@unifr.ch}\\

\end{document}